\newtheorem{theorem}{Theorem}
\newtheorem{corollary}{Corollary}
\newtheorem{lemma}[theorem]{Lemma}
\newtheorem{conjecture}{Conjecture}
\title{Proof of The TAP Free Energy for High-Dimensional Linear Regression with Spherical Priors at All Temperatures} 
\author{Zhiyuan Yu \\
  Department of Statistics\\
  University of Illinois Urbana-Champaign \\
  Champaign, IL 61820 \\
  \texttt{yu124@illinois.edu}\\
  \And
  Jingbo Liu \\
  Department of Statistics\\
  University of Illinois Urbana-Champaign \\
  Champaign, IL 61820 \\
  \texttt{jingbol@illinois.edu}
}
\begin{document}
\maketitle

\begin{abstract}

Approximate inference is central to Bayesian learning, with variational inference (VI) providing a scalable framework for posterior approximation. While mean-field VI often fails in high dimensions, the more refined Bethe approximation, equivalent to the Thouless-Anderson-Palmer (TAP) free energy in statistical physics, has long been conjectured to capture Bayes-optimal behavior. We prove that the TAP formula holds for Bayesian linear regression with a uniform spherical prior at all noise levels ($\Delta>0$), extending the result of Qiu and Sen (2023) in the high-noise regime. Our argument constructs a ridge regression functional that dominates the TAP free energy, yielding the first rigorous analysis of the global optimizer of the non-concave TAP functional for a planted inference model at an arbitrary noise level. This verifies that TAP, rather than mean-field, is the correct variational description in this setting.
\end{abstract}

\section{INTRODUCTION}


Variational inference (VI) has become a central tool in statistics and machine learning for approximating intractable posteriors \citep{wainwright2008graphical}. While classical VI is usually formulated in terms of mean-field approximations,recent statistical results indicate that naive mean-field VI can be asymptotically consistent only under restrictive conditions \citep{mukherjee2021variationalinferencehighdimensionallinear}, 
and may fail in genuinely high-dimensional regimes, motivating the study of refined free energies. A more general principle is given by the Bethe free energy, originally developed in statistical physics \citep{bethe1935statistical}. The Bethe free energy is a variational functional whose stationary points coincide with the fixed points of the belief propagation algorithm \citep{yedidia2001bethe,yedidia2003understanding}. Recent work has provided rigorous analyses of the convergence of belief propagation to Bethe free energy stationary points \citep{koehler2019fast}. In spin glass theory, the corresponding variational principle is the Thouless–Anderson–Palmer (TAP) free energy \citep{thouless1977solution}. Their connection has also been established by \citet{welling2013belief}, who demonstrate that in binary networks the Bethe free energy agrees with the TAP free energy up to second order in the weights. In high-dimensional inference, \citet{Krzakala2014} formulated the functional under the name Bethe free energy, while \citet{qiu2023tap} analyzed the same functional under the TAP terminology in their rigorous study of linear regression.

Recent progress has advanced the theoretical understanding of the TAP free energy. \citet{qiu2023tap} derived asymptotic formulas in certain high-temperature regimes, while \cite{celentano2023meanfieldvariationalinferencetap} analyzed the local geometry of the TAP functional, establishing that the fixed points of Approximate Message Passing (AMP) correspond to approximate local maximizers. Nevertheless, these results do not provide a complete characterization of the variational landscape. 
This naturally leads to the central question:
\begin{quote}
\emph{How to characterize the global maximizers of the TAP free energy?}
\end{quote}

Addressing this question is not only of intrinsic interest in statistical physics and Bayesian inference, but also has implications for modern machine learning. 
In particular, whether the global optimizer of the TAP free energy correctly characterizes posterior means is an important question, especially due to its implications for score-based generative modeling \citep{song2021score,montanari2023posterior,cui2024sampling}.
Thus, clarifying the behavior of TAP maximizers provides both a rigorous foundation for classical inference methods and contemporary relevance for generative modeling.

The Thouless--Anderson--Palmer (TAP) free energy provides a refined variational principle that incorporates Onsager correction terms \citep{thouless1977solution,mezard1987spin}. 
Unlike purely algorithmic approaches, TAP defines an explicit objective that can be optimized by gradient-based methods, which is often regarded as providing greater robustness compared to AMP \citep{el2022sampling}. 
From a statistical perspective, global maximizers of TAP correspond to centers of high-probability states, thereby offering a geometric characterization of posterior clustering \citep{subag2017geometry,mezard1987spin,fan2021tap}. 
From a mathematical perspective, TAP free energies have become fundamental objects of study in probability theory, leading to rigorous formulations in spin glass models \citep{chen2020generalizedtapfreeenergy,subag2023tap} and underpinning major advances such as the resolution of the Ising perceptron capacity problem \citep{huang2024capacity} 
and Talagrand’s operator norm conjecture \citep{brennecke2023operator}. 
These developments demonstrate the broad significance of TAP as a unifying framework in statistical physics, Bayesian inference, and modern machine learning.

Despite these advances, in Bayesian linear regression an essential step remains to be established: whether the global maximum of the TAP functional converges to the true free energy. Constructing this equality would be an essential step toward showing that the global maximizer coincides with the posterior mean. 
Fixed point calculations of  \citep{Krzakala2014} suggest a positive answer (see Conjecture~\ref{conj1} below), but a rigorous proof is difficult because the TAP functional is non-concave and may admit spurious stationary points.

A first step towards rigorously addressing this open problem was taken by \citet{qiu2023tap},
who focused on the high-temperature regime $\Delta>\Delta_0$ (for a certain threshold $\Delta_0$) under uniform spherical priors. 
Consider a Gaussian random linear estimation framework, with data points ${(y_i, x_i) : 1 \leq i \leq n}$, where $\mathbf{y}=(y_i)_{i=1}^N \in \mathbb{R}^N$ are responses and $\mathbf{x}=(x_{ij})_{i=1,j=1}^{N,p} \in \mathbb{R}^{N\times p}$ are feature vectors. The model assumes: 
\begin{align}\label{model1}
\mathbf{y}=\mathbf{x}\boldsymbol{\beta}+\sqrt{\Delta}\mathbf{z},
\end{align}
where $\boldsymbol{\beta} \in \mathbb{R}^p$ is the coefficient vector and $\mathbf{z}=(z_i)_{i=1}^N \sim \mathcal{N}(0, I_N)$ are independent and identically distributed errors. Using a prior distribution $\pi$ for $\boldsymbol{\beta}$, the posterior distribution is constructed as follows:
\begin{align}\label{e2}
\frac{\mathrm{d}P_{\boldsymbol{\beta}|\mathbf{y,x}}}{\mathrm{d}\pi}(\boldsymbol{\beta}|\mathbf{y,x})
=\frac{1}{\mathcal{Z}_p}e^{-\frac{1}{2\Delta}\|\mathbf{y}-\mathbf{x}\boldsymbol{\beta}\|^2}.
\end{align}
The posterior normalization $\mathcal{Z}_p$ is also called the partition function in statistical physics. We also refer to \begin{align}\label{FE}
    F_p=\frac{1}{p}\ln(\mathcal{Z}_p)
\end{align} as the free energy. 
While \citet{qiu2023tap} showed that the TAP approximation correctly characterizes $F_p$ when $\pi$ is a uniform spherical distribution and $\Delta>\Delta_0$, extending its validity to the low-temperature regime has remained an open question:

\begin{conjecture}\label{conj1}{\rm{\citep{Krzakala2014,qiu2023tap}}} 
For Bayesian linear regression with a uniform prior on the sphere $\mathbb{S}^{p-1}(\sqrt{p})$, the TAP free energy provides an asymptotically exact characterization of the true free energy in the high-dimensional limit, for all noise levels $\Delta > 0$. That is,
\begin{align}\label{TAP}
 \lim_{p \to \infty} \frac{1}{p} \log \mathcal{Z}_p=&\lim_{p \to \infty} \max_{\|a\| \leq \sqrt{p}}\{ -\frac{1}{2\Delta p}\|\mathbf{y}-\mathbf{x}a\|^2\notag\\&-\frac{\alpha}{2}\ln(1+\frac{1-\|a\|^2/p}{\Delta\alpha })+\frac{1}{2}\ln(1-\frac{\|a\|^2}{p})\}.
\end{align}
\end{conjecture}

Throughout this paper, we refer to the quantity in \eqref{TAP} as the TAP free energy, which corresponds to the log-partition function or the free entropy in the statistical physics literature, up to a sign.

The spherical prior assumption enables the geometric arguments of \citet{qiu2023tap}, while
a similar analysis for spherical spin models was previously provided by \citet{belius2022high}. 
In addition, variants of spherical or normalized-signal models arise in applied contexts such as signal processing and compressed sensing on spherical domains \citep{cornelius2016compressed} underscoring the relevance of spherical assumptions beyond purely theoretical considerations.
In the spherical case, the posterior distribution is concentrated in a sphere cap,
and the high-temperature assumption is necessary for the arguments of \cite{qiu2023tap} in justifying that the free energy in \emph{any} sphere cap is that of a pure state.
It is worth noting that for spherical Sherrington–Kirkpatrick(SK) models, the TAP expression was established for any temperature, 
if the TAP optimization is constrained to satisfy the so-called Plefka condition to ensure high-temperature phase of the sphere cap \citep{belius2022high}, but it is not obvious whether a similar condition is needed for the Bayes regression problem since $\bf y$ is a ``planted'' signal.
If $\Delta<\Delta_0$, then some large sphere caps are not in a pure state,
but these large spheres may not be relevant for the maximizers of the TAP free energy, 
hence the validity of the TAP free energy representation is elusive.
A more recent work \cite{celentano2023meanfieldvariationalinferencetap} established a version of Conjecture~\ref{conj1} for i.i.d.\ priors, by showing that there exists a local maximum value of the TAP free energy that converges to the free energy.
This does not establish Conjecture~\ref{conj1}, because there may exist local maximizers of the TAP free energy that are not global. 
Indeed, we provide an example in Appendix~\ref{app_A} where the TAP function is non-concave, so that spurious local maximizers is possible.

In this work, 
we prove that the TAP approximation is valid for all $\Delta>0$ under uniform spherical priors, establishing Conjecture~\ref{conj1}. 
As noted before, the key challenge in establishing Conjecture~\ref{conj1} is to exclude the possibility of a larger global maximum value.
While the TAP free energy functional may not be globally concave,
we choose a surrogate functional based on ridge regression that can be shown to globally dominate it.
The surrogate functional is globally concave, 
so that its global maximum can be easily controlled.
Another key ingredient of our proof is the replica symmetric (RS) free energy expression established by \cite{reeves2019replica} and \cite{Barbier2020}.
Our result shows that, unlike in \cite{belius2022high}, 
there is no need to impose the Plefka condition that lower bounds $\|a\|$, for TAP optimization in the Bayes-optimal setting ($\bf y$ is generated by signal from a matching prior),
which aligns with the current practice.
Our result also implies that any near global maximizer of the TAP free energy must be close to the solution of a certain ridge regression.

The remainder of this paper is structured as follows: Section 2 introduces the model setup, notations, and defines the TAP free energy in the context of spherical priors. Section 3 presents our main result, which establishes the asymptotic equivalence between the TAP free energy and the true free energy. Section 4 provides a detailed proof, which proceeds by establishing a Gaussian-spherical equivalence, analyzing the ridge regression surrogate, and connecting it to the RS potential. Section~5, presents numerical simulations that empirically investigate the geometry of the TAP free energy landscape. Section 6 concludes with a discussion of implications and directions for future work.

\section{PRELIMINARIES}

We assume that $\pi$ is a uniform prior on $S_{p-1}(\sqrt{p}) := \left\{ \beta \in \mathbb{R}^p : \|\beta\| = \sqrt{p} \right\}$. The matrix $\mathbf{x}$ has i.i.d.\ Gaussian entries $x_{i,j}\sim \mathcal{N}(0, \frac{1}{N})$, so that its operator norm is $O(1)$. The measurement rate $N/p$ is denoted by $\alpha\in(0,\infty)$, which is assumed to be a constant.
Therefore, the model is characterized by two parameters, $\Delta$ and $\alpha$.
A random variable converging to zero in probability is denoted by $o_P(1)$.

\subsection{TAP Free Energy}

Under the aforementioned settings, let $a$ be a vector in $S_{p-1}$ with the norm $\|a\|^2=pq$. 
We can then define the following TAP function for
$a$, which was derived by 
\citet{qiu2023tap}:
\begin{align}\label{tap_equation}
    f_{\mathrm{TAP}}(a)=&-\frac{1}{2\Delta p}\|\mathbf{y}-\mathbf{x}a\|^2-\frac{\alpha}{2}\ln(1+\frac{1-q}{\Delta\alpha})\notag\\&+\frac{1}{2}\ln(1-q).
\end{align}

    The TAP free energy is subsequently obtained by evaluating the supremum of $f_{\mathrm{TAP}}$, or equivalently 
    \begin{align}
        F_{TAP}=\sup_{\|a\|^2 \le p}f_{\mathrm{TAP}}(a).
    \end{align}
In fact, \citet{Krzakala2014} derived a general version of \eqref{tap_equation} for i.i.d.\ priors (as we will see, spherical prior is asymptotically equivalent to i.i.d.\ Gaussian prior).
They showed that  the stationary points of $f_{\mathrm{TAP}}$ coincides with the fixed points of the AMP algorithm, 
suggesting the conjecture that $F_{TAP}$ is asymptotically equivalent to the true free energy.
\citet{qiu2023tap} established that $F_{TAP}-F_p$ converges in probability to $0$ as $p \to \infty$, provided that $\Delta$ is sufficiently large. And in this work, we will extend their result by proving that this convergence holds for any $\Delta > 0$, thereby removing the restriction on $\Delta$.

\subsection{RS Formula}
We mostly follow the presentation of \citet{Barbier2020},
with some adaptations of their notations to be consistent with \eqref{model1}.
Consider the model in equation \eqref{model1}, where the prior distribution of $\boldsymbol{\beta}$ is given by $P_0^{\otimes p}$.
We define \begin{align}
    \Sigma(E; \Delta)^{-2} := \frac{\alpha}{\alpha \Delta + E} .
\end{align} Let $ i(\tilde{S}; \tilde{Y}) $ denote the mutual information (MI) for the denoising model $ \tilde{y} = \tilde{s} + \tilde{z} \Sigma$, where $ \tilde{s} \sim P_0 $ and $ \tilde{z} \sim \mathcal{N}(0, 1)$. The corresponding expression for $i(\tilde{S}; \tilde{Y})$ is given by:
\begin{align}\label{MI_denoising_model}
    &i(\tilde{S};\tilde{Y})\notag\\=&-\mathbb{E}_{\tilde{S},\tilde{Z}}\left[\ln\mathbb{E}_{\tilde{\beta}}\left[\exp\left(-\frac{(\tilde{\beta}-(\tilde{S}+\tilde{Z}\Sigma))^2}{2\Sigma^2}\right)\right]\right]-\frac{1}{2}.
\end{align}

The replica method then yields the RS potential for model \eqref{model1} with prior $P_0^{\otimes p}$:
\begin{align}\label{RS_potential}
    i^{\mathrm{RS}}(E;\Delta) := i(\tilde{S};\tilde{Y})+\frac{1}{2}(\alpha \ln(1+E/(\alpha\Delta))-\frac{E}{\Sigma^2}).
\end{align}
This formula was first derived by \citet{Tanaka2002} and later rigorously proved by \citet{reeves2019replica}. \citet{Barbier2020} subsequently provided a different proof,
 assuming that the prior is discrete and $i^{\mathrm{RS}}$ has at most three stationary points as a function of $E$. Reeves and Pfister also explored this relationship under the assumption that the prior has finite fourth moment and a "Single-Crossing Property". Additionally, as noted by \citet[Remark~3.6]{Barbier2020}, the discrete prior can be extended to more generalized distributions, where the constraint is relaxed as discussed in the work of \citet{barbier2019optimal}. Specifically, under the assumptions, the following holds:
 \begin{lemma}{\rm{\citep{reeves2019replica}}}
Let $i^{\mathrm{RS}}(E; \Delta)$ be the RS potential defined in \eqref{RS_potential}, and let $\mathcal{Z}_p$ be the partition function defined in \eqref{e2}. Under the assumption that $P_0$ has finite fourth moment and satisfies the single-crossing property, we have:
\begin{align}
    &-\frac{\alpha}{2}-\min_{E\ge 0} i^{\mathrm{RS}}(E;\Delta) = \lim_{p\to \infty}\{\frac{1}{p}\mathbb{E}\ln\mathcal{Z}_p\}
    \label{e_f}
\end{align}
\end{lemma}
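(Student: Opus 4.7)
The equality $\lim \frac{1}{p}\mathbb{E}\ln\mathcal{Z}_p = \lim \mathbb{E}F_p$ is immediate from the definition $F_p=\frac{1}{p}\ln\mathcal{Z}_p$, so the substance of the lemma lies in identifying this limit with $-\alpha/2 - \min_{E\in[0,1]} i^{\mathrm{RS}}(E;\Delta)$. My plan is to establish matching upper and lower bounds via a Gaussian interpolation between the original linear observation model and a decoupled scalar denoising channel with effective noise $\Sigma(E;\Delta)$, in the style of Guerra-Toninelli for the easy direction and Barbier-Macris adaptive interpolation for the harder direction.

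For the lower bound, fix $E\in[0,1]$ and introduce a one-parameter family of Hamiltonians $H_t$, $t\in[0,1]$, that at $t=1$ reproduces the Bayesian linear regression and at $t=0$ factorizes into $p$ independent scalar denoising problems with noise variance $\Sigma(E;\Delta)^2$. Setting $\psi_p(t) := \frac{1}{p}\mathbb{E}\ln \mathcal{Z}_p(t)$ and applying Gaussian integration by parts (Stein's lemma), the derivative $\psi_p'(t)$ reduces to a deterministic piece whose integral contributes the $-\alpha/2$ and the $i^{\mathrm{RS}}(E;\Delta)$ terms, plus a Gibbs-average of $-(Q_{12}-E)^2$ where $Q_{12}$ is the overlap of two replicas. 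In the Bayes-optimal setting the Nishimori identity makes this remainder sign-definite, so dropping it gives $\liminf_{p\to\infty}\frac{1}{p}\mathbb{E}\ln\mathcal{Z}_p \geq -\alpha/2 - i^{\mathrm{RS}}(E;\Delta)$ for every $E$. Minimizing over $E\in[0,1]$ yields one direction.

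For the matching upper bound, I would replace the constant $E$ along the interpolation by a time-dependent path $E(t;\epsilon)$ driven by the average overlap of the interpolating model, perturbed by a small parameter $\epsilon$ to make the associated ODE well-posed. The Barbier-Macris sum rule then gives $\psi_p(1) = -\alpha/2 - i^{\mathrm{RS}}(\bar E(\epsilon);\Delta) + R_p(\epsilon)$, where $\bar E(\epsilon)$ is a critical point of $i^{\mathrm{RS}}(\cdot;\Delta)$ and $R_p(\epsilon)$ is controlled by the $L^2$ concentration of the overlap $Q_{12}$ around its mean. The assumption that $P_0$ has a finite fourth moment delivers the overlap concentration after integration in $\epsilon$ over a small interval, so $R_p(\epsilon)\to 0$ on average, and sending $\epsilon\to 0$ plus invoking the single-crossing property forces $\bar E(\epsilon)$ to land at the global minimizer of $i^{\mathrm{RS}}(\cdot;\Delta)$ rather than at a spurious local stationary point.

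The main obstacle is the upper bound, both in quantifying the overlap concentration (which requires the finite fourth moment and averaging over the perturbation $\epsilon$) and in closing the identification with the global minimum of $i^{\mathrm{RS}}$ on $[0,1]$. The single-crossing property is precisely what rules out the adaptive dynamics getting pinned at a locally but not globally optimal critical point; without it, the argument only delivers a matching at some stationary point of $i^{\mathrm{RS}}(\cdot;\Delta)$, which is insufficient. The lower bound, in contrast, is essentially a one-line consequence of the Guerra interpolation and the Nishimori identity.
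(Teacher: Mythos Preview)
The paper does not prove this lemma; it is imported as a black-box result from Reeves and Pfister \cite{reeves2019replica}, with \cite{Barbier2020} mentioned as an alternative source. There is therefore no ``paper's own proof'' to compare your attempt against---the authors simply invoke the statement and move on.

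That said, your sketch is a reasonable high-level outline of one of the known proofs, though it is closer in spirit to the Barbier--Macris adaptive interpolation method than to the Reeves--Pfister I-MMSE/area-theorem argument that the lemma actually cites. The Guerra-plus-Nishimori lower bound is indeed the easy half and your description of it is accurate. For the upper bound, one point deserves tightening: the single-crossing property is the structural hypothesis specific to the Reeves--Pfister proof (it enters through the I-MMSE relation, not through adaptive interpolation), whereas the adaptive scheme you describe typically closes the argument differently. Once you have the Guerra bound $\liminf \frac{1}{p}\mathbb{E}\ln\mathcal{Z}_p \ge -\alpha/2 - \min_E i^{\mathrm{RS}}(E;\Delta)$ and the adaptive sum rule delivering $\limsup \frac{1}{p}\mathbb{E}\ln\mathcal{Z}_p \le -\alpha/2 - i^{\mathrm{RS}}(E^*;\Delta)$ at \emph{some} stationary point $E^*$, these two inequalities already force $E^*$ to be the global minimizer, without appeal to single-crossing. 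So your invocation of that hypothesis is slightly misplaced relative to the method you chose, though the overall plan is sound and would, if executed, reproduce a valid proof.
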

 Thus, if we can establish that our prior satisfies the RS formula and $\min_{E\ge 0} i^{\mathrm{RS}}(E;\Delta) = -\max_{\|a\|^2\le p} f_{\mathrm{TAP}}(a)-\frac{\alpha}{2}$ holds, it would confirm the correctness of the TAP free energy in the uniform spherical setting for all $\Delta > 0$. And we defer the verification of the RS assumptions to Appendix~\ref{section_B}. 
 
\section{MAIN RESULT}

In this section, we formally state our main theoretical result, which establishes the asymptotic correctness of the TAP free energy approximation for high-dimensional Bayesian linear regression under a uniform spherical prior. 

 \begin{theorem}\label{theorem_1}
Fix $ \alpha > 0 $ and $ \Delta > 0 $. Consider the Bayesian linear regression model
\begin{align}
y = \mathbf{x} \beta + \sqrt{\Delta} \, \epsilon,\label{mod_1}
\end{align}
where $ \mathbf{x} \in \mathbb{R}^{N \times p} $ has i.i.d.\ $ \mathcal{N}(0, \tfrac{1}{N}) $ entries, $ \epsilon \sim \mathcal{N}(0, I_N) $, and the prior on $ \beta \in \mathbb{R}^p $ is uniform on the sphere $ \mathbb{S}^{p-1}(\sqrt{p}) $. Assume that $ N/p \to \alpha $ as $ p \to \infty $.

Then, as $ p \to \infty $, we have convergence in probability (with respect to the randomness in $ \mathbf{x} $, $ \beta $, and $ \epsilon $):
\begin{align}\label{theorem}
\lim_{p \to \infty} \max_{\|a\| \leq \sqrt{p}} f_{\mathrm{TAP}}(a) = \lim_{p \to \infty} F_p,
\end{align}
where $ f_{\mathrm{TAP}}(a) $ denotes the TAP free energy functional defined in \eqref{tap_equation} and $ F_p = \frac{1}{p} \ln \mathcal{Z}_p $ is the free energy shown in \eqref{FE}.
\end{theorem}

 \begin{proof}
 The result follows from Lemma~\ref{lower} and Lemma~\ref{upper} ahead.
 \end{proof}
As a corollary, any near maximizer of the TAP functional must be close to a ridge regressor:
\begin{corollary}\label{cor1}
In the setting of Theorem~\ref{theorem_1},
define 
$R_{p,r}:=\max\{\frac1{p}\|a-\hat{a}_{\rm ridge}\|_2^2\colon
f_{\rm TAP}(a)\ge
\sup_{\|a\|\le \sqrt{p}}f_{\rm TAP}(a)
-r\}$,
for any $r\ge 0$,
where $\hat{a}_{\rm ridge}:={\rm argmin}_{a\in\mathbb{R}^p}\{\frac{1}{\Delta}\|\mathbf{y}-\mathbf{x}a\|^2+\|a\|^2\}$.
Then we have the convergence in probability $\lim_{p\to\infty} R_{p,r}\le 2r$.
\end{corollary}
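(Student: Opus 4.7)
The plan is to combine the ridge-based surrogate inequality built in the proof of Theorem~\ref{theorem_1} with strong convexity of the ridge objective itself. Write $R(a):=\tfrac{1}{\Delta}\|\mathbf{y}-\mathbf{x}a\|^2+\|a\|^2$, so that $\hat{a}_{\rm ridge}={\rm argmin}_a R(a)$. The Hessian $\nabla^2 R = \tfrac{2}{\Delta}\mathbf{x}^T\mathbf{x}+2I\succeq 2I$ gives the deterministic estimate $R(a)-R(\hat{a}_{\rm ridge})\ge\|a-\hat{a}_{\rm ridge}\|^2$ for every $a\in\mathbb{R}^p$. As announced in the introduction, the proof of Lemma~\ref{upper} constructs a surrogate of the form
\begin{align}
g(a):=-\tfrac{1}{2p}R(a)+C_p
\end{align}
(for a suitable data-dependent constant $C_p$) that dominates $f_{\rm TAP}$ uniformly on the feasible ball, $f_{\rm TAP}(a)\le g(a)$ whenever $\|a\|\le\sqrt{p}$, and whose unconstrained maximum $g(\hat{a}_{\rm ridge})$ satisfies $g(\hat{a}_{\rm ridge})=\sup_{\|a\|\le\sqrt{p}}f_{\rm TAP}(a)+o_p(1)$, since both sides must converge to $F_p$ by Theorem~\ref{theorem_1} and the RS formula.

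With these two ingredients the corollary is a short chain. Fix $r\ge 0$ and let $a$ satisfy $\|a\|\le\sqrt{p}$ and $f_{\rm TAP}(a)\ge\sup f_{\rm TAP}-r$. Dividing the strong-convexity estimate by $2p$ and using $\nabla g(\hat{a}_{\rm ridge})=0$ yields $g(a)\le g(\hat{a}_{\rm ridge})-\tfrac{1}{2p}\|a-\hat{a}_{\rm ridge}\|^2$ for every $a\in\mathbb{R}^p$. Chaining,
\begin{align}
\sup f_{\rm TAP}-r\le f_{\rm TAP}(a)\le g(a)\le g(\hat{a}_{\rm ridge})-\tfrac{1}{2p}\|a-\hat{a}_{\rm ridge}\|^2=\sup f_{\rm TAP}-\tfrac{1}{2p}\|a-\hat{a}_{\rm ridge}\|^2+o_p(1),
\end{align}
which rearranges to $\tfrac{1}{p}\|a-\hat{a}_{\rm ridge}\|^2\le 2r+o_p(1)$. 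Since the bound is uniform over the set of $r$-near-maximizers, taking the supremum over such $a$ gives $R_{p,r}\le 2r+o_p(1)$, i.e.\ the stated convergence in probability.

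The only substantive check is that the surrogate from Lemma~\ref{upper} delivers both a \emph{uniform} pointwise upper bound on $\{\|a\|\le\sqrt{p}\}$ and a value at $\hat{a}_{\rm ridge}$ matching $\sup f_{\rm TAP}$ up to $o_p(1)$. These are exactly the properties the surrogate must have for Lemma~\ref{upper} to reproduce the RS formula en route to Theorem~\ref{theorem_1}, so no new probabilistic input is needed. The constant $2$ in $R_{p,r}\le 2r$ is precisely the strong-convexity constant of $R$, itself a reflection of the ridge regularization $\lambda=1$ matched to the normalization $\|\boldsymbol{\beta}\|^2=p$ of the spherical prior; if the surrogate needed a different ridge regularization the corollary would carry the corresponding constant.
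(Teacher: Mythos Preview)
The proposal is correct and follows essentially the same argument as the paper: both use the ridge surrogate $g(a)=-\tfrac{1}{2\Delta p}\|\mathbf{y}-\mathbf{x}a\|^2-\tfrac{1}{2p}\|a\|^2+C$ from Lemma~\ref{lem7} as a uniform upper bound on $f_{\rm TAP}$, combine the fact that its maximum matches $\sup f_{\rm TAP}$ up to $o_p(1)$ (via Theorem~\ref{theorem_1} and the computation in Lemma~\ref{upper}), and then exploit the $\tfrac{1}{p}$-strong concavity of $g$ to convert a gap of $r$ in objective values into the bound $\tfrac{1}{p}\|a-\hat{a}_{\rm ridge}\|^2\le 2r+o_p(1)$. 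One minor remark: the constant $C$ in the paper is deterministic (depending only on $\alpha,\Delta$), not data-dependent as you wrote, but this is immaterial to the argument.
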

\begin{proof}
Suppose that $a$ satisfies $f_{\rm TAP}(a)\ge \sup_{\|a\|\le \sqrt{p}}f_{\rm TAP}(a)-r$.
Then for $C$ defined in \eqref{e_c},
we have 
$-\frac{1}{2\Delta p}\|\mathbf{y}-\mathbf{x}a\|^2-\frac1{2p}\|a\|^2+C
\ge f_{\rm TAP}(a)\ge\sup_{\|a\|\le \sqrt{p}}f_{\rm TAP}(a)-r\ge \lim_{p\to\infty}F_p-r+o_p(1)$.
On the other hand, in the proof of Lemma~\ref{upper} we obtained $\lim_{p\to\infty}\sup_{a\in\mathbb{R}^p}
\{-\frac{1}{2\Delta p}\|\mathbf{y}-\mathbf{x}a\|^2-\frac1{2p}\|a\|^2\}+C\le \lim_{p\to\infty}F_p$.
Since $\nabla_a^2\{-\frac{1}{2\Delta p}\|\mathbf{y}-\mathbf{x}a\|^2-\frac1{2p}\|a\|^2\}\preceq \nabla_a^2\{-\frac1{2p}\|a\|^2\}= -\frac1{p}I_p$ (in terms of the ordering of positive semidefinite matrices),
we see that $\frac1{2p}\|a-\hat{a}_{\rm ridge}\|^2\le r+o_p(1)$,
and the result follows.
\end{proof}

\section{PROOF OF THE MAIN RESULT}

\subsection{Equivalence of i.i.d.\ Normal Prior and the Spherical Prior}
In this section, we show that the asymptotic free energy under the spherical prior can be calculated using the result of \citet{reeves2019replica} with i.i.d.\ Gaussian prior.
Intuitively this is simply because a high-dimensional gaussian vector is concentrated near a sphere,
but the actual proof also relies strongly on the Bayes-optimal assumption; 
we expect that the result will no longer be true in a mismatched setting (misspecified model) where the true prior differs from the prior used in defining the TAP expression.

Let 
$\tilde{\pi}:=\mathcal{N}(0,I_p)$, 
and 
\begin{align}
\mathbf{y}:=\mathbf{x}\boldsymbol{\beta}_0+\sqrt{\Delta}\mathbf{z}
\label{e_11}
\end{align}
where $\boldsymbol{\beta}_0\sim \tilde{\pi}$, 
${\bf x}\sim \mathcal{N}(0,\frac1{N}I_N\otimes I_p)$
and 
${\bf z}\sim \mathcal{N}(0,I_N)$.
Let $\Pi$ denote the projection onto the centered sphere of radius $\sqrt{p}$.
Note that $\Pi(\boldsymbol{\beta}_0)$ thus follows the uniform distribution on the centered sphere of radius $\sqrt{p}$,
which we denote by $\pi$.
Set 
\begin{align}
\mathbf{y}':=\mathbf{x}\Pi(\boldsymbol{\beta}_0)+\sqrt{\Delta}\mathbf{z}.
\label{e12}
\end{align}
Recall that $o_P(1)$ denotes a random variable vanishing in probability, and similarly for $o_P(\sqrt{p})$ etc.

\begin{lemma}
Consider random $\bf x$, $\bf y$, and $\bf y'$ defined in \eqref{e_11} and \eqref{e12}.
We have 
\begin{align}
&\frac1{p}\ln\int 
e^{-\frac1{2\Delta}\|{\bf y-x}\boldsymbol{\beta}\|^2}
d\tilde{\pi}(\boldsymbol{\beta})
\notag\\=&\frac1{p}\ln\int
e^{-\frac1{2\Delta}\|{\bf y'-x}\boldsymbol{\beta}\|^2}
d\pi(\boldsymbol{\beta}) +
o_P(1).
\end{align}
\end{lemma}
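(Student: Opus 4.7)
The plan is to bridge the Gaussian and spherical partition functions via an intermediate quantity $Z_\pi := \int e^{-\|\mathbf{y}-\mathbf{x}\boldsymbol{\beta}\|^2/(2\Delta)}\,d\pi(\boldsymbol{\beta})$ (spherical prior of radius $\sqrt{p}$, data $\mathbf{y}$), and show in two steps that (i) $\frac{1}{p}(\ln Z - \ln Z_\pi) = o_P(1)$ via posterior concentration of $\|\boldsymbol{\beta}\|^2/p$ at $1$, and (ii) $\frac{1}{p}(\ln Z_\pi - \ln Z') = o_P(1)$ via $\|\mathbf{y}-\mathbf{y}'\| = O_P(1)$, where $Z$ and $Z'$ denote the left- and right-hand partition functions respectively.

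For step (i), I would decompose $\tilde\pi$ radially: write $\boldsymbol{\beta} = \sqrt{p}\,s\,\hat\boldsymbol{\beta}$ with $\hat\boldsymbol{\beta} \sim \bar\sigma$ (uniform on $S^{p-1}$) and $s = \|\boldsymbol{\beta}\|/\sqrt{p}$ having density $\tilde\rho$ that concentrates on $s=1$ with $O(1/\sqrt{p})$ fluctuations (standard $\chi_p$ asymptotics). Setting $\psi(s) := \int_{S^{p-1}}\exp(-\|\mathbf{y}-\sqrt{p}\,s\,\mathbf{x}\hat\boldsymbol{\beta}\|^2/(2\Delta))\,d\bar\sigma(\hat\boldsymbol{\beta})$ gives $Z = \int \tilde\rho(s)\psi(s)\,ds$ and $Z_\pi = \psi(1)$. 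The key fact is that the LHS is Bayes-optimal ($\boldsymbol{\beta}_0 \sim \tilde\pi$ is the true prior), so the posterior is the explicit Gaussian $\mathcal{N}(A^{-1}\mathbf{x}^\top\mathbf{y}/\Delta,\,A^{-1})$ with $A := I + \mathbf{x}^\top\mathbf{x}/\Delta$. A direct computation (equivalently a Nishimori identity) yields $\mathbb{E}[\mathbb{E}_{\mathrm{post}}\|\boldsymbol{\beta}\|^2/p] = 1$, while the posterior variance $\mathrm{Var}_{\mathrm{post}}(\|\boldsymbol{\beta}\|^2) = 4\mu^\top A^{-1}\mu + 2\,\mathrm{tr}(A^{-2}) = O(p)$ (using $\|A^{-1}\|_{\mathrm{op}} \le 1$) forces posterior concentration of $s$ at $1$. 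Hence for any fixed $\epsilon>0$, $\int_{|s-1|\le\epsilon}\tilde\rho(s)\psi(s)\,ds = (1 + o_P(1))Z$. On this range, expanding
\[
\|\mathbf{y} - \sqrt{p}\,s\,\mathbf{x}\hat\boldsymbol{\beta}\|^2 - \|\mathbf{y} - \sqrt{p}\,\mathbf{x}\hat\boldsymbol{\beta}\|^2 = -2\sqrt{p}(s-1)\langle\mathbf{y},\mathbf{x}\hat\boldsymbol{\beta}\rangle + p(s^2 - 1)\|\mathbf{x}\hat\boldsymbol{\beta}\|^2
\]
and bounding $|\langle\mathbf{y},\mathbf{x}\hat\boldsymbol{\beta}\rangle| \le \|\mathbf{y}\|\|\mathbf{x}\|_{\mathrm{op}} = O_P(\sqrt{p})$ and $\|\mathbf{x}\hat\boldsymbol{\beta}\|^2 \le \|\mathbf{x}\|_{\mathrm{op}}^2 = O_P(1)$ uniformly in $\hat\boldsymbol{\beta}$ yields $\psi(s)/\psi(1) \in [e^{-C\epsilon p},\,e^{C\epsilon p}]$ for some $C = O_P(1)$. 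Thus $\frac{1}{p}|\ln Z - \ln Z_\pi| \le C\epsilon + o_P(1)$; sending $\epsilon \to 0$ after $p \to \infty$ gives $o_P(1)$.

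For step (ii), write $\mathbf{y} - \mathbf{y}' = (\|\boldsymbol{\beta}_0\| - \sqrt{p})\mathbf{x}\hat\boldsymbol{\beta}_0$ with $\hat\boldsymbol{\beta}_0 := \boldsymbol{\beta}_0/\|\boldsymbol{\beta}_0\|$. Since $\|\boldsymbol{\beta}_0\|^2 \sim \chi_p^2$ gives $\|\boldsymbol{\beta}_0\| - \sqrt{p} = O_P(1)$ (delta method), and $\|\mathbf{x}\hat\boldsymbol{\beta}_0\|^2 \sim \chi_N^2/N = 1 + o_P(1)$ conditionally on $\hat\boldsymbol{\beta}_0$, one has $\|\mathbf{y} - \mathbf{y}'\| = O_P(1)$. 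Expanding
\[
\|\mathbf{y} - \mathbf{x}\boldsymbol{\beta}\|^2 - \|\mathbf{y}' - \mathbf{x}\boldsymbol{\beta}\|^2 = \|\mathbf{y}\|^2 - \|\mathbf{y}'\|^2 - 2\langle\mathbf{y} - \mathbf{y}',\,\mathbf{x}\boldsymbol{\beta}\rangle
\]
and applying Cauchy--Schwarz with $\|\mathbf{y}\|,\|\mathbf{y}'\| = O_P(\sqrt{p})$, $\|\mathbf{x}\|_{\mathrm{op}} = O_P(1)$, and $\|\boldsymbol{\beta}\| = \sqrt{p}$ bounds the difference by $O_P(\sqrt{p})$ uniformly over $\mathrm{supp}(\pi)$. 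Therefore $Z_\pi/Z' \in e^{\pm O_P(\sqrt{p})}$, giving $|\ln Z_\pi - \ln Z'| = O_P(\sqrt{p}) = o_P(p)$.

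The main obstacle is the posterior concentration in step (i): since $\psi(1)$ may be as small as $e^{-\Theta(p)}$ in the low-temperature regime, the naive bound $\psi(s) \le 1$ combined with the merely exponential concentration of $\tilde\rho$ is not sufficient to rule out contributions from $|s-1| \gg 1/\sqrt{p}$. It is precisely the Bayes-optimal matching of signal prior and inference prior that, via the explicit Gaussian posterior (or Nishimori), pins the posterior on radius $\sqrt{p}$ and ensures that the mass of $Z$ concentrates where $\psi$ is well-approximated by $\psi(1) = Z_\pi$, which is consistent with the authors' remark that the lemma is expected to fail in a mismatched setting.
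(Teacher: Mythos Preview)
Your two-step bridge via the intermediate $Z_\pi$ (spherical prior, data $\mathbf{y}$) is exactly the paper's route: the paper first restricts the Gaussian integral to a thin shell $G=\{|\,\|\boldsymbol\beta\|/\sqrt{p}-1|\le u_p\}$ using posterior concentration (their Nishimori step, eq.~(17)), replaces $\boldsymbol\beta$ by its spherical projection $\Pi(\boldsymbol\beta)$ on $G$, and then swaps $\mathbf y$ for $\mathbf y'$ using $\|\boldsymbol\beta_0-\Pi(\boldsymbol\beta_0)\|=o_P(\sqrt p)$. Your radial decomposition $\psi(s)$ with fixed $\epsilon\to 0$ is a cosmetic variant of their shrinking $u_p$, and your step~(ii) is the same Cauchy--Schwarz bound (in fact you get the slightly sharper $\|\mathbf y-\mathbf y'\|=O_P(1)$).

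There is one small logical slip in step~(i): knowing $\mathbb E\bigl[\mathbb E_{\mathrm{post}}\|\boldsymbol\beta\|^2/p\bigr]=1$ together with $\mathrm{Var}_{\mathrm{post}}(\|\boldsymbol\beta\|^2)=O(p)$ only shows the posterior concentrates around its own (random) mean, not around $1$; you still need $\mathbb E_{\mathrm{post}}\|\boldsymbol\beta\|^2/p\to 1$ in probability over the data. The cleanest fix is precisely the Nishimori identity you mention parenthetically, used the way the paper does: since a posterior sample $\hat{\boldsymbol\beta}$ has the same marginal as $\boldsymbol\beta_0\sim\tilde\pi$, one has $\mathbb E\bigl[\mathbb P_{\mathrm{post}}(|s-1|>\epsilon)\bigr]=\mathbb P(|\,\|\boldsymbol\beta_0\|/\sqrt p-1|>\epsilon)\to 0$, and Markov then gives $\mathbb P_{\mathrm{post}}(|s-1|>\epsilon)=o_P(1)$. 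With that patch your argument is complete and matches the paper's.
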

Note that as a consequence of this lemma and the result of \citet{reeves2019replica}, the right side of \eqref{theorem} (for spherical prior) can be calculated using \eqref{RS_potential} and \eqref{e_f} with the Gaussian prior.

\begin{proof}

Let $(u_p)_{p\ge 1}$ be a nonnegative vanishing sequence satisfying \begin{align}
\lim_{p\to \infty}
\mathbb{P}\left[\left|\frac{\|\boldsymbol{\beta}_0\|}{\sqrt{p}}-1\right|>u_p\right]=0,
\label{e11}
\end{align}
where $\boldsymbol{\beta}_0\sim \tilde{\pi}$,
which is possible if $u_p=\omega(\frac1{\sqrt{p}})$, as established by \citet[Section~3.1]{vershynin2010introduction}.

We introduce an auxiliary probability measure $\tilde{P}$, which will serve as a convenient reference distribution in the subsequent analysis.
Let $\tilde{P}_{\boldsymbol{\beta}_0 {\bf xz}}=\tilde{\pi}\times \mathcal{N}(0,\frac1{N}I_{N\times p})\times \mathcal{N}(0,I_N)$, 
let $\mathbf{y}$ be a function of $({\bf x ,z},\boldsymbol{\beta}_0)$ as define by \eqref{e_11},
and then define $\tilde{P}_{\boldsymbol{\beta}_0 {\bf xzy}\hat{\beta}}
=
\tilde{P}_{\boldsymbol{\beta}_0 {\bf xzy}}
\tilde{P}_{\boldsymbol{\hat{\beta}} |{\bf xy}}$
by setting $\tilde{P}_{\boldsymbol{\hat{\beta}} |{\bf xy}}
=
\tilde{P}_{\boldsymbol{\beta}_0 |{\bf xy}}$.
In other words, under $\tilde P$, the estimator $\hat{\boldsymbol{\beta}}$ is conditionally distributed as an independent posterior sample of the planted parameter $\boldsymbol{\beta}_0$ given the data $({\bf x},\mathbf y)$.
Hence $\boldsymbol{\hat{\beta}}$ is a posterior sample following the Bayesian rule.
By symmetry, we have the equivalence of the marginal distributions $\tilde{P}_{\boldsymbol{\hat{\beta}}}=\tilde{P}_{\boldsymbol{\beta}_0}$.
Consequently, if we define 
\begin{align}
\lambda_{\boldsymbol{\beta}_0{\bf xz}}
:=
\mathbb{P}\left[\left.\left|\frac{\|\boldsymbol{\hat{\beta}}\|}{\sqrt{p}}-1\right|>u_p\right|\boldsymbol{\beta}_0, {\bf x,z}\right]
\end{align}
under $\tilde{P}$, then 
\begin{align}
&\mathbb{E}[\lambda_{\boldsymbol{\beta}_0{\bf xz}}]
=
\mathbb{P}\left[\left|\frac{\|\boldsymbol{\hat{\beta}}\|}{\sqrt{p}}-1\right|>u_p\right]
\notag\\=&\mathbb{P}\left[\left|\frac{\|\boldsymbol{\beta}_0\|}{\sqrt{p}}-1\right|>u_p\right]
=o(1).
\label{e17}
\end{align}
Now define 
\begin{align}
G:=\left\{\boldsymbol{\beta}\colon 
\left|\frac{\|\boldsymbol{\beta}\|}{\sqrt{p}}-1\right|
\le u_p
\right\}.
\end{align}
Then
\begin{align}
&\frac1{p}\ln\int 
e^{-\frac1{2\Delta}\|{\bf y-x}\boldsymbol{\beta}\|^2}
d\tilde{\pi}(\boldsymbol{\beta})
\notag\\=&\frac1{p}\ln\int_G 
e^{-\frac1{2\Delta}\|{\bf y-x}\boldsymbol{\beta}\|^2}
d\tilde{\pi}(\boldsymbol{\beta})
-
\frac1{p}\ln \tilde{P}_{\boldsymbol{\beta}_0|{\bf xy}}[G|{\bf xy}]
\notag\\
=&\frac1{p}\ln\int_G
e^{-\frac1{2\Delta}\|{\bf y-x}\boldsymbol{\beta}\|^2}
d\tilde{\pi}(\boldsymbol{\beta})
+
o_P(1),
\label{e18}
\end{align}
where the last step follows 
by $\tilde{P}_{\boldsymbol{\beta}_0|{\bf xy}}[G|{\bf xy}]=1-\lambda_{\boldsymbol{\beta}_0{\bf xz}}$ and \eqref{e17}.
Now let $\mu$ be the distribution of $\boldsymbol{\beta}_0\sim \tilde{\pi}$ conditioned on  $\boldsymbol{\beta}_0\in G$.
We have from \eqref{e11} that 
\begin{align}
&\frac1{p}\ln\int_G
e^{-\frac1{2\Delta}\|{\bf y-x}\boldsymbol{\beta}\|^2}
d\tilde{\pi}(\boldsymbol{\beta})
+
o_P(1)
\notag\\=&
\frac1{p}\ln\int
e^{-\frac1{2\Delta}\|{\bf y-x}\boldsymbol{\beta}\|^2}
d\mu(\boldsymbol{\beta})
+
o_P(1).
\label{e19}
\end{align}
Recall that $\Pi$ is the projection of $\boldsymbol{\beta}$ onto the centered sphere of radius $\sqrt{p}$.
For any $\boldsymbol{\beta}_0\in G$,
we have 
$\|\boldsymbol{\beta}-\Pi(\boldsymbol{\beta})\|=o(\sqrt{p})$,
and hence
\begin{align}
&\left|
\|{\bf y-x}\boldsymbol{\beta}\|^2
-
\|{\bf y-x}\Pi(\boldsymbol{\beta})\|^2
\right|\notag\\=&
|\left<{\bf x}(\boldsymbol{\beta}-\Pi(\boldsymbol{\beta}))
,
2{\bf y-x}
(\boldsymbol{\beta}+\Pi(\boldsymbol{\beta})
\right>|
\\
=&o(\sqrt{p}(\sigma_{\max}({\bf x})\|y\|
+\sigma_{\max}({\bf x})^2\sqrt{p}))
\\
=&o_P(p),
\end{align}
where we used Lemma~\ref{lem3}.
Therefore, we obtain
\begin{align}
&\frac1{p}\ln\int
e^{-\frac1{2\Delta}\|{\bf y-x}\boldsymbol{\beta}\|^2}
d\mu(\boldsymbol{\beta})
+
o_P(1)
\notag\\=&
\frac1{p}\ln\int
e^{-\frac1{2\Delta}\|{\bf y-x}\Pi(\boldsymbol{\beta})\|^2}
d\mu(\boldsymbol{\beta})
+
o_P(1)
\notag\\
=&\frac1{p}\ln\int
e^{-\frac1{2\Delta}\|{\bf y-x}\boldsymbol{\beta}\|^2}
d\pi(\boldsymbol{\beta})
+
o_P(1).
\label{e24}
\end{align}
Then
\begin{align}
&\|{\bf y-x}\boldsymbol{\beta}\|^2
-
\|{\bf y'-x}\boldsymbol{\beta}\|^2
\notag\\=&\left<{\bf y-y'},
{\bf y+y'-2 x}\boldsymbol{\beta}
\right>
\\
=&
\left<{\bf x}(\boldsymbol{\beta}_0-\Pi(\boldsymbol{\beta}_0)),
{\bf y+y'-2 x}\boldsymbol{\beta}
\right>
\\
=&o_P(p),
\end{align}
since $\|\boldsymbol{\beta}_0-\Pi(\boldsymbol{\beta}_0)\|=o_P(\sqrt{p})$ when $\boldsymbol{\beta}_0\sim \tilde{\pi}$, and $\|{\bf y+y'-2 x}\boldsymbol{\beta}\|=O_P(\sqrt{p})$.
In turn, 
\begin{align}
&\frac1{p}\ln\int
e^{-\frac1{2\Delta}\|{\bf y-x}\boldsymbol{\beta}\|^2}
d\pi(\boldsymbol{\beta})
+
o_P(1)
\notag\\=
&\frac1{p}\ln\int
e^{-\frac1{2\Delta}\|{\bf y'-x}\boldsymbol{\beta}\|^2}
d\pi(\boldsymbol{\beta})
+
o_P(1).
\end{align}
\end{proof}

\begin{lemma}{\rm{\citep{vershynin2010introduction}}}
\label{lem3}
Let $A$ be a $N\times p$ matrix with i.i.d. standard normal entries. For every $k>0$, the largest eigenvalue of $A$ satisfies:
\begin{align}
    \sigma_{max}(A)\le \sqrt{N}+\sqrt{p}+k,
\end{align}
with probability at least $1-2\exp(-k^2/2)$.
\end{lemma}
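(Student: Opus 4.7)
The proof is a classical application of two ingredients: Gaussian concentration of measure for Lipschitz functions, together with Gordon's comparison inequality (or the simpler Sudakov--Fernique) to control the expected operator norm. I would organize the argument in two stages: first establish concentration of $\sigma_{\max}(A)$ around its mean, then bound the mean itself.

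For the concentration step, I would first observe that $A\mapsto \sigma_{\max}(A)$ is a $1$-Lipschitz function of the entries of $A$ viewed as a point in $\mathbb{R}^{Np}$ with the Euclidean (Frobenius) norm, since
\begin{align}
|\sigma_{\max}(A)-\sigma_{\max}(B)|\le \|A-B\|_{\mathrm{op}}\le \|A-B\|_F.
\end{align}
Because the entries of $A$ are i.i.d.\ $\mathcal{N}(0,1)$, the Borell--Tsirelson--Ibragimov--Sudakov concentration inequality for $1$-Lipschitz functions of a standard Gaussian vector immediately yields, for every $k>0$,
\begin{align}
\mathbb{P}\bigl(|\sigma_{\max}(A)-\mathbb{E}\sigma_{\max}(A)|>k\bigr)\le 2e^{-k^2/2},
\end{align}
which already produces the factor $2e^{-k^2/2}$ appearing in the statement.

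For the expectation, I would invoke Sudakov--Fernique. Introduce the mean-zero Gaussian processes
\begin{align}
X_{u,v}:=u^{\top}Av,\qquad Y_{u,v}:=\langle g,u\rangle+\langle h,v\rangle,
\end{align}
indexed by $(u,v)\in S^{N-1}\times S^{p-1}$, where $g\sim\mathcal{N}(0,I_N)$ and $h\sim\mathcal{N}(0,I_p)$ are independent. A direct covariance computation gives $\mathbb{E}[(X_{u,v}-X_{u',v'})^2]\le \mathbb{E}[(Y_{u,v}-Y_{u',v'})^2]$, so Sudakov--Fernique implies
\begin{align}
\mathbb{E}\sigma_{\max}(A)=\mathbb{E}\sup_{u,v}X_{u,v}\le \mathbb{E}\sup_{u,v}Y_{u,v}=\mathbb{E}\|g\|+\mathbb{E}\|h\|\le \sqrt{N}+\sqrt{p},
\end{align}
where in the final step one uses $\mathbb{E}\|g\|\le \sqrt{\mathbb{E}\|g\|^2}=\sqrt{N}$ by Jensen, and similarly for $h$. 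Combining this with the concentration bound above gives the claim.

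The argument presents no real obstacle; the only mild subtlety is checking the covariance comparison carefully. An alternative route that avoids the Gaussian-process comparison altogether is an $\varepsilon$-net argument on $S^{N-1}\times S^{p-1}$ combined with the sub-Gaussian tail bound for a single bilinear form $u^{\top}Av$, but this recovers the same estimate only up to absolute constants and is less tight than the Sudakov--Fernique approach.
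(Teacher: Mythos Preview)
Your argument is correct and is precisely the standard proof of this result: Gaussian concentration for the $1$-Lipschitz map $A\mapsto\sigma_{\max}(A)$ together with the Sudakov--Fernique comparison to bound $\mathbb{E}\sigma_{\max}(A)\le\sqrt{N}+\sqrt{p}$. The paper does not supply its own proof of this lemma; it is quoted directly from \cite{vershynin2010introduction}, and what you have written is essentially the argument given there, so there is nothing further to compare.
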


\subsection{TAP Free Energy at a Ridge Solution}
Our proof relies on analyzing a ridge regression solution:
\begin{align}
a_{\Delta}=A_{\Delta}^{-1}\mathbf{x}^{\top}\mathbf{x}\boldsymbol{\beta}+A_{\Delta}^{-1}\mathbf{x}^{\top}\epsilon,
\label{e_ridge}
\end{align}
where $A_{\Delta}:=\mathbf{x}^{\top}\mathbf{x}+\Delta I$. 
To analyze the asymptotic behavior of the ridge regression solution, we consider the normalized squared norm:
\begin{align}
\lim_{p\to\infty}\frac{\|a_{\Delta}\|^2}{p}
&=\lim_{p\to\infty}\frac{1}{p}
(\|A_{\Delta}^{-1}\mathbf{x}^{\top}\mathbf{x}\boldsymbol{\beta}\|^2
+\|A_{\Delta}^{-1}\mathbf{x}^{\top}\epsilon\|^2)
\\
&=\lim_{p\to\infty}\frac{1}{p}(
{\rm tr}(\mathbf{x}^{\top}\mathbf{x}A_{\Delta}^{-2}\mathbf{x}^{\top}\mathbf{x})
+
\Delta{\rm tr}(\mathbf{x}A_{\Delta}^{-2}\mathbf{x}^{\top}))\\
&=1+\lim_{p\to\infty}-\frac{\Delta}{p}{\rm tr} ( A_{\Delta}^{-1} ),\label{q_tr}
\end{align}
where the second equation can be derived using the independence of $\epsilon$ and $\boldsymbol{\beta}$ as well as LLN. 

The trace of $A_{\Delta}^{-1}$ in the preceding equations can be determined using the Marchenko–Pastur distribution, which provides the asymptotic behavior of these quantities for large matrices. As the the eigenvalues of $\mathbf{x}^{\top}\mathbf{x}$ follow Marchenko–Pastur law, the Stieltjes transform yields the following:
\begin{align}
&\lim_{p\to\infty}\frac1{p}{\rm tr} ( A_t^{-1} ) =\lim_{p\to\infty}\frac1{t}T(t),\label{MP_law}\\
    &T(t) := \frac{-\alpha+1-t\alpha+\sqrt{(-\alpha+1-t\alpha)^2+4\alpha t}}{2}.
\end{align}
The complete statements and proof can be found in the work of \citet{Bai2010}. 

Next we shall establish the following result:
\begin{lemma}\label{lem4}
Define $q_\Delta:=1-T(\Delta)$. Then $\lim_{p\to\infty}\mathbb{P}[\|a_{\Delta}\|< \sqrt{p}]=1$. Moreover, under the convention that $f_{\mathrm{TAP}}(a) = -\infty$ for any $a$ with $\|a\| > \sqrt{p}$ (i.e., outside the domain of the TAP free energy functional), we have convergence in probability:
\begin{align}
&\lim_{p\to\infty}f_{\mathrm{TAP}}(a_{\Delta})\notag\\=&-\frac{1}{2}\alpha+\frac{1}{2}q_\Delta-\frac{\alpha}{2}\ln(1+\frac{1-q_\Delta}{\Delta\alpha})+\frac{1}{2}\ln(1-q_\Delta).
\end{align}
\end{lemma}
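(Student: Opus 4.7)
The plan is to reduce the lemma to two explicit quadratic-form calculations, one for $\|a_\Delta\|^2/p$ and one for $\|\mathbf{y} - \mathbf{x} a_\Delta\|^2/p$. Both reduce to traces of rational functions of $A_\Delta$ that are asymptotically controlled by the Marchenko--Pastur formula \eqref{MP_law}, and both quadratic forms concentrate about their means by Hanson--Wright (applied conditionally on $\mathbf{x}$, using $\|A_\Delta^{-1}\|_{\mathrm{op}} \le 1/\Delta$ and $\|\mathbf{x}\|_{\mathrm{op}} = O(1)$ on the high-probability event supplied by Lemma~\ref{lem3}). Once these two scalar limits are in hand, the remaining two logarithmic terms in $f_{\mathrm{TAP}}(a_\Delta)$ are continuous functions of $\|a_\Delta\|^2/p$ at a point strictly inside $(0,1)$, so the conclusion follows by the continuous mapping theorem.

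First, I would use $\mathbf{x}^\top\mathbf{x} = A_\Delta - \Delta I$ to expand $\mathbf{x}^\top\mathbf{x} A_\Delta^{-2}\mathbf{x}^\top\mathbf{x} = I - 2\Delta A_\Delta^{-1} + \Delta^2 A_\Delta^{-2}$ and $\mathbf{x}^\top\mathbf{x} A_\Delta^{-2} = A_\Delta^{-1} - \Delta A_\Delta^{-2}$. Taking expected values of the two squared terms coming from \eqref{e_ridge} (the cross term vanishes by independence of $\boldsymbol{\beta}$ and $\epsilon$) and adding, the $A_\Delta^{-2}$ pieces cancel, leaving $\mathbb{E}\|a_\Delta\|^2/p = 1 - \Delta\,\mathrm{Tr}(A_\Delta^{-1})/p$, which by \eqref{MP_law} tends to $1 - T(\Delta) = q_\Delta$. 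The explicit square-root expression for $T$ makes $T(\Delta) > 0$ transparent, so $q_\Delta < 1$, and therefore $\mathbb{P}[\|a_\Delta\| < \sqrt{p}] \to 1$.

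Second, I would exploit $I - A_\Delta^{-1}\mathbf{x}^\top\mathbf{x} = \Delta A_\Delta^{-1}$ to write $\mathbf{y} - \mathbf{x} a_\Delta = \Delta \mathbf{x} A_\Delta^{-1}\boldsymbol{\beta} + (I - \mathbf{x} A_\Delta^{-1}\mathbf{x}^\top)\epsilon$. A parallel trace computation, with the $\mathrm{Tr}(A_\Delta^{-2})$ contributions again cancelling between the signal and noise parts, yields $\mathbb{E}\|\mathbf{y} - \mathbf{x} a_\Delta\|^2/p = \Delta^2\,\mathrm{Tr}(A_\Delta^{-1})/p + \Delta(N - p)/p \to \Delta(\alpha - q_\Delta)$, so that $-\frac{1}{2\Delta p}\|\mathbf{y} - \mathbf{x} a_\Delta\|^2 \to -\frac{\alpha}{2} + \frac{q_\Delta}{2}$ in probability. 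Summing this with the limits of the two logarithmic terms at $q = q_\Delta$ reproduces the claimed formula. The main (mild) obstacle is really just the algebraic bookkeeping needed to verify the double cancellation of $\mathrm{Tr}(A_\Delta^{-2})$; there is no low-temperature subtlety here because $a_\Delta$ is completely explicit and \eqref{MP_law} holds uniformly for all $\Delta > 0$.
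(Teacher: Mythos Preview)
Your proposal is correct and follows essentially the same route as the paper: the paper also decomposes $a_\Delta$ and $\mathbf{y}-\mathbf{x}a_\Delta$ into signal and noise parts, reduces both norms to traces of rational functions of $A_\Delta$, and applies \eqref{MP_law} to land on $\Delta^2\,\mathrm{tr}(A_\Delta^{-1})/p + \Delta(N-p)/p$ and $1-\Delta\,\mathrm{tr}(A_\Delta^{-1})/p$ respectively. Your version is in fact slightly more careful---you make the concentration step explicit via Hanson--Wright (the paper just invokes ``LLN''), and you explicitly check $T(\Delta)>0$ so that $q_\Delta<1$, which is needed for the first claim and for the logarithmic terms to be finite and continuous at $q_\Delta$.
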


\begin{proof}
    From \eqref{q_tr} and \eqref{MP_law}, it remains to analyze the asymptotic behavior of the quantity $\frac{1}{p}\|\mathbf{y}-\mathbf{x}a_\Delta\|^2$. Exploiting the independence between $\epsilon$ and $\beta$, we obtain:
\begin{align}\label{first_term}  &\lim_{p\to\infty}\frac{1}{p}\|\mathbf{y}-\mathbf{x}a_\Delta\|^2\notag\\=&\lim_{p\to\infty}\frac{1}{p}\|\mathbf{x}(I-A_\Delta^{-1}\mathbf{x}^{\top}\mathbf{x})\boldsymbol{\beta}\|^2+\|(I-\mathbf{x}A_\Delta^{-1}\mathbf{x}^{\top})\epsilon\|^2\\
    =&\lim_{p\to\infty}\frac{1}{p}[\Delta^2{\rm tr} ( A_\Delta^{-1} ) + \Delta(N-p)]\\
    =&\Delta T(\Delta) + \Delta(\alpha-1).
\end{align}
Substituting this result into the TAP functional defined in equation \eqref{tap_equation} concludes the proof.
\end{proof}






\subsection{Asymptotics of the Free Energy}
Similar to the work of \citet{Barbier2020}, we expect that $1 - q = \text{MSE}$, and that the true free energy can be determined by fixed-point equations. This formulation allows for a direct validation of its equivalence to the TAP representation.

For the Gaussian prior $P_0 \sim \mathcal{N}(0,1)$, the fourth moment is finite and the Single-Crossing Property holds (see Appendix \ref{section_B} for details). Thus, $P_0$ satisfies Assumptions II and III as proposed by \citet{reeves2019replica}.
The mutual information for the denoising model $i(\tilde{S};\tilde{Y})$ in \eqref{MI_denoising_model} can then be expressed in a simplified form as:
\begin{align}
i(\tilde{S};\tilde{Y})=
-\frac{1}{2}\ln(\frac{\Sigma^2}{\Sigma^2+1}).
\end{align}
This result facilitates the calculation of RS potential in \eqref{RS_potential}, which can be expressed as:
\begin{align}\label{RS_GS}
    &-i^{\mathrm{RS}}(E;\Delta) \notag\\=&\frac{E}{2\Sigma^2}- \frac{1}{2}\alpha \ln(1+E/(\alpha\Delta))+\frac{1}{2}\ln(\frac{\Sigma^2}{\Sigma^2+1}).
\end{align}

By setting its derivative to zero, the above equation can be further simplified to derive the fixed-point condition for $E$. Specifically, we obtain:
\begin{align}\label{fixed_point_condition}
    E=\frac{\alpha\Delta+E}{\alpha\Delta+E+\alpha}=\frac{\Sigma^2}{\Sigma^2+1},
\end{align}
which has a unique solution for $E\ge 0$.
Alternatively, this can be expressed as:
\begin{align}
E_\Delta=\frac{1-\alpha-\alpha\Delta+\sqrt{(\alpha\Delta+\alpha-1)^2+4\alpha\Delta}}{2}=T(\Delta).
    \label{closed_form_solution_E}
\end{align}
This result also suggests that the assumptions formulated by \citet{Barbier2020}, specifically that $i^{\mathrm{RS}}(E;\Delta)$ has at most three stationary points, as well as Assumption III proposed by \citet{reeves2019replica} are valid. Consequently, the RS formula is correctly established.

The R.H.S. of \eqref{theorem} can then be written as:
\begin{align}\label{RS_free_energy}
    &-\min_{E\ge 0} i^{\mathrm{RS}}(E;\Delta)-\frac{\alpha}{2} \notag\\=&-\frac{\alpha}{2}+\frac{1}{2}(1-E_\Delta)-\frac{1}{2}\alpha \ln(1+E_\Delta/(\alpha\Delta))+\frac{1}{2}\ln(E_\Delta).
\end{align}
Observing the equivalence between $E_\Delta$ and $1-q_\Delta$, we can readily derive the following conclusion. 


\begin{lemma}\label{lower}
Let $f_{\mathrm{TAP}}$ and $i^{\mathrm{RS}}(E;\Delta)$ be defined as \eqref{tap_equation} and \eqref{RS_GS}. Then the TAP free energy provides a lower bound matching the replica prediction with convergence in probability:
\begin{align}   \lim_{p\to\infty}\max_{\|a\|^2\le p}f_{\mathrm{TAP}}(a)
    \ge -\min_{E\ge 0} i^{\mathrm{RS}}(E;\Delta)-\frac{\alpha}{2}.
\end{align}
\end{lemma}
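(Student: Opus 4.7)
The plan is to use the ridge regression vector $a_\Delta$ from \eqref{e_ridge} as a concrete feasible point and show that $f_{\mathrm{TAP}}(a_\Delta)$ asymptotically matches the replica symmetric value on the right-hand side. Since the supremum of $f_{\mathrm{TAP}}$ over $\{\|a\|\le\sqrt{p}\}$ is at least $f_{\mathrm{TAP}}(a_\Delta)$ whenever $a_\Delta$ is in the feasible region, a matching evaluation at $a_\Delta$ immediately gives the desired lower bound.

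First, I would invoke Lemma~\ref{lem4} which guarantees two things: (i) $\|a_\Delta\|<\sqrt{p}$ with probability tending to $1$, so $a_\Delta$ is a valid test point; and (ii) the explicit limit
\begin{align*}
\lim_{p\to\infty}f_{\mathrm{TAP}}(a_\Delta)=-\tfrac{\alpha}{2}+\tfrac{1}{2}q_\Delta-\tfrac{\alpha}{2}\ln\!\bigl(1+\tfrac{1-q_\Delta}{\Delta\alpha}\bigr)+\tfrac{1}{2}\ln(1-q_\Delta),
\end{align*}
where $q_\Delta=1-T(\Delta)$.

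Next, I would compare this expression to the replica symmetric prediction computed in \eqref{RS_free_energy}. The crucial identity is $1-q_\Delta=T(\Delta)=E_\Delta$, which is immediate from the definition $q_\Delta:=1-T(\Delta)$ together with the closed-form solution \eqref{closed_form_solution_E} of the RS fixed-point equation. Substituting $1-q_\Delta=E_\Delta$ into the formula from Lemma~\ref{lem4} produces exactly
\begin{align*}
-\tfrac{\alpha}{2}+\tfrac{1}{2}(1-E_\Delta)-\tfrac{\alpha}{2}\ln\!\bigl(1+E_\Delta/(\alpha\Delta)\bigr)+\tfrac{1}{2}\ln(E_\Delta),
\end{align*}
which coincides with the RS value $-\min_{E\in[0,1]}i^{\mathrm{RS}}(E;\Delta)-\tfrac{\alpha}{2}$ in \eqref{RS_free_energy}.

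Combining the two pieces, on the high-probability event $\{\|a_\Delta\|\le \sqrt{p}\}$ we have $\max_{\|a\|^2\le p}f_{\mathrm{TAP}}(a)\ge f_{\mathrm{TAP}}(a_\Delta)$, and passing to the limit in probability yields the claim. This argument is essentially algebraic once Lemma~\ref{lem4} is in hand, so I do not anticipate a serious obstacle; the only care needed is to make the $o_P(1)$ bookkeeping explicit when passing from the probabilistic feasibility of $a_\Delta$ and the probabilistic evaluation of $f_{\mathrm{TAP}}(a_\Delta)$ to the $\liminf$ in probability of the supremum. The real work of the paper is not here but in the complementary upper bound (Lemma~\ref{upper}), where one must rule out spurious global maximizers of the non-concave TAP functional.
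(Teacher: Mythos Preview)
Your proposal is correct and follows essentially the same route as the paper: plug in the ridge point $a_\Delta$, use Lemma~\ref{lem4} for feasibility and the asymptotic value of $f_{\mathrm{TAP}}(a_\Delta)$, and match against \eqref{RS_free_energy} via $1-q_\Delta=E_\Delta$. The paper's own proof is a one-line chain of equalities invoking exactly these ingredients.
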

\begin{proof}
With $a_{\Delta}$ defined in \eqref{e_ridge}, by lemma \ref{lem4} and \eqref{RS_free_energy}, we have:
\begin{align}   &\lim_{p\to\infty}\max_{\|a\|^2\le p}f_{\mathrm{TAP}}(a)
\\\ge& 
\lim_{p\to\infty}f_{\mathrm{TAP}}(a_{\Delta})= -\min_{E\ge 0} i^{\mathrm{RS}}(E;\Delta)-\frac{\alpha}{2},
\end{align}  
with the convergence in probability.
\end{proof}

\subsection{Global Maximum}

The preceding analysis shows that $f=f_{\mathrm{TAP}}(1-E_\Delta)\le \max_q f_{\mathrm{TAP}}(q)$, and $q_\Delta=1-E_\Delta$ is a stationary point of $f_{\mathrm{TAP}}$.

Define 
\begin{align}
g_{\mathrm{TAP}}(q):=&\max_{\|a\|^2= pq}\{-\frac1{2\Delta p}\|y-Xa\|^2
\}\notag\\&-\frac{\alpha}{2}\ln(1+\frac{1-q}{\Delta\alpha})
+\frac1{2}\ln(1-q).
\label{e55}
\end{align}

The goal is to show that $q_\Delta=1-E_\Delta$ is (asymptotically) the global maximizer of $g_{\mathrm{TAP}}$.
Define
\begin{align}
g(q):=\max_{\|a\|^2=pq}\{-\frac1{2\Delta p}\|y-Xa\|^2\}-\frac{q}{2}
+C,
\label{e53}
\end{align}
where $C$ is a constant chosen such that $g(1-E_\Delta)=g_{\mathrm{TAP}}(1-E_\Delta)$,
that is,
\begin{align}
C:=\frac{1-E_\Delta}{2}
-\frac{\alpha}{2}\ln(1+\frac{E_\Delta}{\Delta\alpha})
+\frac1{2}\ln E_\Delta.
\label{e_c}
\end{align}

\begin{lemma}\label{lem7}
Let $g$ and $g_{\mathrm{TAP}}$ be defined as \eqref{e53} and \eqref{e55}. Then, for any $q\in[0,1]$, $
g_{\mathrm{TAP}}(q)\le g(q)    
$.
\end{lemma}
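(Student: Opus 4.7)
The key observation that unlocks this lemma is that $g_{\mathrm{TAP}}(q)$ and $g(q)$ defined in \eqref{e55} and \eqref{e53} share the identical ridge-regression term $\max_{\|a\|^2=pq}\{-\frac{1}{2\Delta p}\|\mathbf{y}-\mathbf{x}a\|^2\}$, so the desired inequality $g_{\mathrm{TAP}}(q)\le g(q)$ is actually a deterministic inequality in $q$ alone. Substituting $E := 1 - q$, it reduces to
\begin{align*}
\phi(E) \le C, \qquad \text{where}\qquad \phi(E) := -\tfrac{\alpha}{2}\ln\!\bigl(1+\tfrac{E}{\Delta\alpha}\bigr) + \tfrac{1}{2}\ln E + \tfrac{1-E}{2}.
\end{align*}
By the definition of $C$ in \eqref{e_c}, one has $C = \phi(E_\Delta)$, so the task becomes showing that $E_\Delta$ is the global maximizer of $\phi$ on $(0,1)$.

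The plan is a clean single-variable calculus argument. I would first compute
\begin{align*}
\phi'(E) = \tfrac{1}{2E} - \tfrac{\alpha}{2(\alpha\Delta + E)} - \tfrac{1}{2},
\end{align*}
and show that $\phi'(E)=0$ rearranges, after clearing denominators, to the quadratic $E^2 + (\alpha - 1 + \alpha\Delta)E - \alpha\Delta = 0$. This is exactly the fixed-point equation \eqref{fixed_point_condition}. Since the product of its roots is $-\alpha\Delta<0$, there is a unique positive root, which by \eqref{closed_form_solution_E} is precisely $E_\Delta\in(0,1)$. Hence $\phi$ has exactly one critical point in $(0,1)$.

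To promote this to a global statement, I would combine uniqueness of the critical point with boundary information: $\phi(E)\to-\infty$ as $E\to 0^+$ due to the $\tfrac{1}{2}\ln E$ term, while $\phi'(1) = -\tfrac{\alpha}{2(\alpha\Delta+1)} < 0$. Because $\phi'$ is continuous on $(0,1]$ and vanishes only at $E_\Delta$, the intermediate value theorem forces $\phi'>0$ on $(0,E_\Delta)$ and $\phi'<0$ on $(E_\Delta,1]$. Thus $\phi$ is increasing then decreasing, $E_\Delta$ is the unique global maximizer, and $\phi(E)\le C$ for all $E\in[0,1]$, establishing the lemma.

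There is no genuine obstacle once the cancellation of the shared ridge term is observed; the rest is bookkeeping. The one subtlety worth flagging is that $\phi$ is \emph{not} concave on $(0,1)$ since the $-\tfrac{\alpha}{2}\ln(1+E/(\alpha\Delta))$ term is convex in $E$, so one cannot invoke concavity to pass from local to global optimality directly; the shape argument via uniqueness of the critical point plus boundary behavior is essential. This also dovetails with the paper's broader theme that the TAP functional itself is not globally concave, which is precisely why a surrogate $g$ is needed.
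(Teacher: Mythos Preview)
Your proposal is correct and essentially identical to the paper's own proof. The paper defines $h(q):=g(q)-g_{\mathrm{TAP}}(q)$ and shows it is nonnegative by checking that $h'(q)=0$ has the unique solution $q=1-E_\Delta$ (via the same fixed-point equation \eqref{fixed_point_condition}) and then uses the boundary signs $h'(0)<0$, $\lim_{q\uparrow 1}h'(q)=+\infty$ to conclude $1-E_\Delta$ is the global minimizer of $h$; your $\phi(E)$ is precisely $C-h(1-E)$, so your maximization of $\phi$ is the paper's minimization of $h$ under the substitution $E=1-q$, with the same derivative and boundary analysis.
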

\begin{proof}
Define 
\begin{align}
h(q):= -q/2+C+\frac{\alpha}{2}\ln(1+\frac{1-q}{\Delta\alpha})
-\frac1{2}\ln(1-q),
\end{align}
with $C$ defined in \eqref{e_c}, so $h(1-E_0)=0$ by construction. 
The equation $h'(q)=0$ is equivalent to \eqref{fixed_point_condition} which has a unique solution for $q\in(0,1)$. 
Hence $h'(1-E_0)=0$ by the definition of $E_0$.
Furthermore, we observe that $h'(0)<0$ and $\lim_{q\uparrow 1}h'(q)=+\infty$.
Therefore $h'(q)>0$ for $q\in(1-E_0,1)$ and $h'(q)<0$ for $q\in(0,1-E_0)$.
Consequently, $g(q)-g_{\mathrm{TAP}}(q)=h(q)\ge0$ for $q\in(0,1)$.
\end{proof}

We are now ready to show the following:
\begin{lemma}\label{upper}
We have the matching upper bound:
\begin{align}   \lim_{p\to\infty}\max_{\|a\|^2\le p}f_{\mathrm{TAP}}(a)
\le -\min_{E\ge 0} i^{\mathrm{RS}}(E;\Delta)-\frac{\alpha}{2}.
\end{align}
\end{lemma}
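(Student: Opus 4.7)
The plan is to exploit the concave surrogate $g$ constructed before Lemma~\ref{lem7} and the fact that its unconstrained maximizer coincides with the ridge estimator $a_\Delta$ of \eqref{e_ridge}, whose asymptotics are already in hand from Lemma~\ref{lem4} and the Marchenko--Pastur computation \eqref{q_tr}--\eqref{MP_law}.

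\textbf{Step 1 (Reduction via the surrogate).} Writing $q=\|a\|^2/p$, the maximization splits as
\begin{align}
\max_{\|a\|^2\le p}f_{\mathrm{TAP}}(a) = \max_{q\in[0,1]}g_{\mathrm{TAP}}(q).
\end{align}
By Lemma~\ref{lem7}, $g_{\mathrm{TAP}}(q)\le g(q)$ on $[0,1]$, so it suffices to upper bound $\max_q g(q)$. Since maximizing $g(q)$ over $q\in[0,1]$ amounts to maximizing the sum $-\frac1{2\Delta p}\|y-Xa\|^2-\frac1{2p}\|a\|^2+C$ over the ball $\{\|a\|^2\le p\}$, and the latter bound is only relaxed by dropping the norm constraint,
\begin{align}
\max_{q\in[0,1]}g(q) \;\le\; \sup_{a\in\mathbb{R}^p}\Bigl\{-\tfrac1{2\Delta p}\|y-Xa\|^2-\tfrac1{2p}\|a\|^2\Bigr\}+C.
\end{align}

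\textbf{Step 2 (Closed form of the surrogate maximum).} The objective above is strongly concave in $a$ with Hessian $-\frac1p(\frac1\Delta X^\top X+I)\prec 0$, so its unconstrained maximizer is unique and solves $X^\top(y-Xa)=\Delta a$, i.e.\ it is exactly the ridge solution $a_\Delta=(X^\top X+\Delta I)^{-1}X^\top y$. Thus the supremum equals $-\frac1{2\Delta p}\|y-Xa_\Delta\|^2-\frac1{2p}\|a_\Delta\|^2+C$.

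\textbf{Step 3 (Asymptotics via Marchenko--Pastur).} By \eqref{q_tr}--\eqref{MP_law}, $\|a_\Delta\|^2/p\to 1-T(\Delta)=1-E_\Delta$ in probability, and by \eqref{first_term} in the proof of Lemma~\ref{lem4}, $\|y-Xa_\Delta\|^2/p\to \Delta E_\Delta+\Delta(\alpha-1)$. Substituting yields
\begin{align}
-\tfrac1{2\Delta p}\|y-Xa_\Delta\|^2-\tfrac{\|a_\Delta\|^2}{2p}\;\xrightarrow{\;p\to\infty\;}\;-\tfrac{E_\Delta}{2}-\tfrac{\alpha-1}{2}-\tfrac{1-E_\Delta}{2}=-\tfrac{\alpha}{2}.
\end{align}
Combining with the definition of $C$ in \eqref{e_c} and with \eqref{RS_free_energy} gives
\begin{align}
\lim_{p\to\infty}\Bigl(\sup_{a\in\mathbb{R}^p}\{-\tfrac1{2\Delta p}\|y-Xa\|^2-\tfrac1{2p}\|a\|^2\}+C\Bigr) = -\tfrac{\alpha}{2}+C = -\min_{E\in[0,1]}i^{\mathrm{RS}}(E;\Delta)-\tfrac{\alpha}{2},
\end{align}
which is exactly the desired upper bound.

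\textbf{Main obstacle.} There is no substantial analytic difficulty left once Lemma~\ref{lem7} is in place; the delicate conceptual point was choosing the surrogate $g$ so that (a) it pointwise dominates $g_{\mathrm{TAP}}$ and (b) its unique unconstrained maximizer is the ridge estimator whose asymptotics are governed by Marchenko--Pastur. Both pieces are already established, and the remaining work is just to verify that the cancellation $-E_\Delta/2-(1-E_\Delta)/2-(\alpha-1)/2=-\alpha/2$ occurs and that the constant $C$ was chosen precisely so the resulting expression coincides with the RS free energy.
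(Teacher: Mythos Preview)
Your proof is correct and follows essentially the same route as the paper: reduce to $g_{\mathrm{TAP}}$, dominate by the ridge surrogate $g$ via Lemma~\ref{lem7}, relax the ball constraint, identify the unconstrained maximizer as the ridge estimator $a_\Delta$, and read off the asymptotic value $-\alpha/2+C$ from the Marchenko--Pastur computations already done. The only cosmetic addition is your explicit Hessian check for strong concavity in Step~2, which the paper leaves implicit.
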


\begin{proof}
We have
\begin{align}
\max_{\|a\|^2\le p}f_{\mathrm{TAP}}(a)
&\le\sup_{q\in[0,1]} g(q)
\\
&\le -\frac{1}{2\Delta p}\|\mathbf{y}-\mathbf{x}a_{\Delta}\|^2-\frac1{2p}\|a_{\Delta}\|^2+C.
\end{align}

Using the expressions for \( \|a_\Delta\|^2/p \) from \eqref{q_tr} and \( \|y - \mathbf{x}a_\Delta\|^2/p \) from \eqref{first_term}, we can evaluate the asymptotic value of the upper bound. Specifically, we have:
\begin{align}
    &\lim_{p\to\infty}\{-\frac{1}{2\Delta p}\|\mathbf{y}-\mathbf{x}a_{\Delta}\|^2-\frac1{2p}\|a_{\Delta}\|^2\}\notag\\=& -\frac{1}{2}(T(\Delta) + \alpha-1)-\frac{1}{2}(1-T(\Delta))
    =-\frac{\alpha}{2}.
\end{align}
This matches $C-\frac{\alpha}{2}=-i^{\mathrm{RS}}(E_\Delta;\Delta)-\frac{\alpha}{2}
= -\min_{E\ge 0} i^{\mathrm{RS}}(E;\Delta)-\frac{\alpha}{2}$. Therefore, the upper bound converges to the desired quantity, completing the proof.
\end{proof}

Combining the lower bound in Lemma \ref{lower} with the upper bound in Lemma \ref{upper}, we can conclude that
\begin{align}
    \lim_{p \to \infty} \max_{\|a\|^2 \leq p} f_{\mathrm{TAP}}(a) = -\min_{E\ge 0} i^{\mathrm{RS}}(E; \Delta) - \frac{\alpha}{2},
\end{align}
which establishes the asymptotic equivalence between the TAP free energy and the RS prediction for all $\Delta > 0$ under the uniform spherical prior. To conclude the proof of Theorem \ref{theorem_1} , it remains to
control the fluctuations of $F_p$ and show that convergence in expectation
implies convergence in probability. We address this in the following
subsection.

\subsection{Concentration of the Free Energy}

In the preceding analysis we established the convergence of the expected free
energy. To upgrade this to convergence
of the random free energy itself, it suffices to show that the fluctuations of $F_p$ vanish in the high-dimensional limit.

\begin{lemma}[Concentration of free energy]\label{lem_9}
Under the assumptions of Theorem \ref{theorem_1}, we have
\begin{align}
{\rm Var}(F_p) = O\left(\frac{1}{p}\right).
\end{align}
In particular,
\begin{align}
F_p - \mathbb{E}[F_p] \xrightarrow{P} 0.
\end{align}
\end{lemma}

The proof follows from Gaussian concentration applied to the Gaussian design matrix $X$ and noise vector $w$. The key observation is that $\log Z_p$ is Lipschitz in these random inputs with a Lipschitz constant of order $O(\sqrt{p})$ (See Appendix \ref{app_C} for details). Combining Lemma \ref{lower}, Lemma \ref{upper}, and Lemma \ref{lem_9}, we establish that
\begin{align}
F_p \xrightarrow{P}
\max_{\|a\|\leq \sqrt{p}} f_{\mathrm{TAP}}(a).
\end{align}
This completes the proof of Theorem \ref{theorem_1}.




\section{NUMERICAL SIMULATION}
We complement our theoretical results with numerical experiments that examine the finite-sample behavior of the TAP free energy in Bayesian linear regression. 
The purpose of this experiment is to provide empirical evidence for Corollary \ref{cor1}, which asserts that any near-maximizer of the TAP free energy lies close to the ridge regression solution. 
Accordingly, our simulations compare the maximizers of the TAP functional with the ridge estimator and examine how their distance behaves as the dimension grows.

We considered the model shown in the beginning of section 2. For each configuration $(\alpha,\Delta,p)$, we generate $\mathbf{x}$, $\beta$, and $\mathbf{y} = \mathbf{x}\beta + \sqrt{\Delta
}\mathbf{z}$. 
We then numerically optimize the TAP free energy functional $f_{\mathrm{TAP}}(a)$ with respect to $a \in \mathbb{R}^p$ under the constraint $\|a\|\le \sqrt{p}$. 
The optimization is performed by projected gradient ascent with a fixed step size and stopping tolerance. 
Because the TAP functional is non-convex, we use multiple restarts. 
Among all runs, we retain the maximum TAP value as our empirical estimate of the global maximizer.

To account for randomness, each experiment is repeated $R=30$ times with independent seeds. 
For each repetition we record (i) the TAP free energy at the estimated maximizer, 
(ii) the ridge regression free energy, and (iii) the squared distance: 
\begin{align}
       D = \frac{1}{p}\|a_{\text{TAP}} - a_{\text{ridge}}\|^2,
\end{align}
which quantifies the discrepancy between the TAP maximizer and the ridge solution. 
By Corollary \ref{cor1}, $D$ is predicted to vanish asymptotically; our simulations examine this prediction across system sizes ranging from $p=200$ to $p=1600$. 
\begin{figure}[h]
\centering
\includegraphics[width=0.45\textwidth]{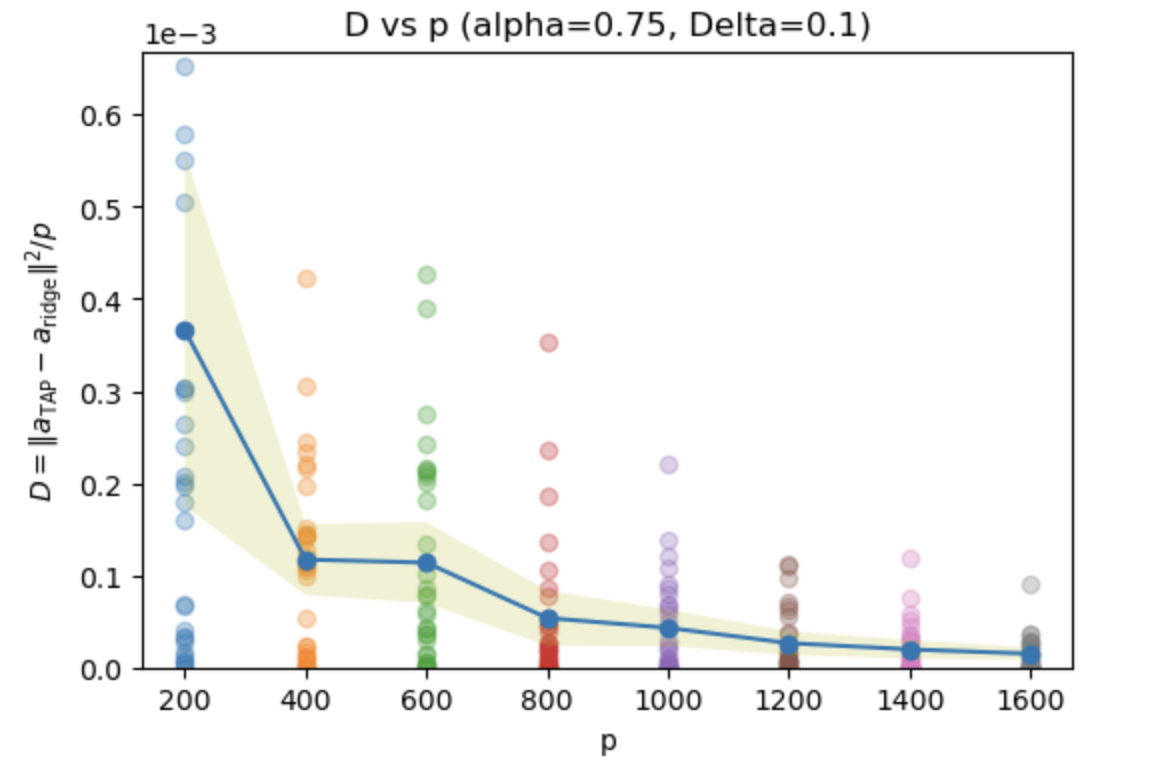}
\caption{Numerical Results Under $\Delta = 0.1$, $\alpha = 0.75$.}\label{fig1}
\end{figure}

Figure \ref{fig1} reports the squared distance $D = \|a_{\mathrm{TAP}}-a_{\mathrm{ridge}}\|^2/p$ 
between the TAP maximizer and the ridge regression estimator for a representative configuration $(\alpha=0.75,\Delta=0.1)$. 
The results clearly demonstrate that $D$ decreases as $p$ grows, in agreement with Corollary \ref{cor1}. 
Although finite-sample oscillations are visible for small $p$, the overall trend supports the theoretical prediction that 
maximizer of TAP concentrate around the ridge solution in the proportional high-dimensional regime. (see Appendix \ref{app_D} for additional experiments).

\section{CONCLUSION}
Our results establish that under suitable conditions, the TAP free energy admits a maximizer whose value converges to the true free energy. This work advances the theoretical understanding of the TAP free energy approximation in high-dimensional Bayesian inference. By establishing its equivalence to the RS formula, this study enhances the applicability of the TAP framework for high-dimensional Bayesian inference, offering a more accurate alternative to the widely used mean-field approximation.

Although we have proven that the TAP free energy converges in probability to the true free energy, the behavior of its maximizer under the uniform spherical prior remains to be explored.
In particular, while the TAP maximizer coincides with the posterior mean in the case of a Gaussian prior,
Corollary~\ref{cor1} suggests that the same holds under the spherical prior, though a rigorous justification requires additional work. Moreover, while our result shows that any near-maximizer of the TAP functional must be close to the ridge solution, this does not in itself provide the full geometric control established in the high-temperature analysis of \citet{qiu2023tap}. Our theorem resolves the question about the global maximizer of the TAP formula, but does not imply band-wise free energy control or pure-state structure. Establishing such a geometric characterization remains open, and we view it as a promising direction for future work.

Future directions include extending the TAP framework to more general prior distributions, including non-spherical and non-Gaussian settings, and assessing its applicability in more complex models. A natural next question is how to extend our results for spherical priors to the case of general i.i.d. priors, as conjectured by \citet{Krzakala2014}. 
To this end, the method-of-types approach from \cite{liu2022minoration,liu2023soft} for the free energy could provide an analogue to the geometric arguments used in spherical models \citep{qiu2023tap}.
Another important future direction is to develop and analyze algorithms for optimizing the TAP free energy in this non-convex regime. Understanding the geometry of the TAP landscape and designing robust optimization methods that can navigate potential spurious local maxima are essential steps toward practical inference in high-dimensional settings.

\subsubsection*{Acknowledgments}
This research was supported in part by NSF Grant DMS-2515510. 
The authors also thank Kevin Luo and Qiang Wu for helpful discussions.



%

\bibliography{ref.bib}
\newpage
\appendix
\begin{center}
\LARGE \textbf{Supplementary Materials}
\end{center}
\section{PROOF OF NON-CONCAVITY}\label{app_A}

In this section, we present a rigorous proof to illustrate that the TAP free energy functional may not be globally concave under the spherical prior.

Let
\begin{align}
h(q) := \frac{\alpha}{2} \log\left(1 + \frac{1 - q}{\Delta \alpha} \right) - \frac{1}{2} \log(1 - q). \label{eq:h_def}
\end{align}

The first and second derivative of $h$ can then be calculated as:
\begin{align}
    h'(q) = -\frac{\alpha}{2(\Delta\alpha+1-q)}+\frac{1}{2(1-q)},\\
    h''(q) =-\frac{\alpha}{2(\Delta\alpha+1-q)^2}+\frac{1}{2(1-q)^2}.
\end{align}
Thus, the Hessian of the TAP free energy satisfies:
\begin{align}
- p \cdot \nabla^2 f_{\mathrm{TAP}}(a) = \frac{4}{p} h''(q) \cdot a a^\top + 2 h'(q) I_p + \frac{1}{\Delta} \mathbf{x}^\top \mathbf{x}, \label{eq:hessian}
\end{align}
where $q := \|a\|^2/p$. Let $\lambda_{\min}$ denote the smallest eigenvalue of $X^\top X$, and $u_{\min}$ the corresponding eigenvector. From random matrix theory, the empirical spectral distribution of $X^\top X$ converges (as $p, n \to \infty$ with $n/p \to \alpha \in (1,\infty)$) to the Marchenko–Pastur law. In particular, the smallest eigenvalue $\lambda_{\min}$ converges almost surely to
\begin{align}
\lambda_{\min} \longrightarrow \left(1 - \sqrt{\alpha^{-1}} \right)^2.
\end{align}

Then, we consider a vector $a$ aligned with $u_{\min}$ of the form
\begin{align}
a = \frac{u_{\min}}{\|u_{\min}\|} \cdot \sqrt{p q}.
\end{align}

Then the second directional derivative becomes
\begin{align}
- a^\top \nabla^2 f_{\mathrm{TAP}}(a) a = q \left( 4q h''(q) + 2 h'(q) + \frac{\lambda_{\min}}{\Delta}\right). \label{eq:directional}
\end{align}

Dividing both sides by $q$, we obtain
\begin{align}
\frac{1}{q} \cdot \left( -a^\top \nabla^2 f_{\mathrm{TAP}}(a) a \right)
=& -\frac{2 \alpha q}{(\Delta \alpha + 1 - q)^2}
+ \frac{2q}{(1 - q)^2}
+ \frac{1}{1 - q}
- \frac{\alpha}{\Delta \alpha + 1 - q}
+ \frac{\lambda_{\min}}{\Delta}\\
=&-\frac{(\Delta\alpha+1+\ q)\alpha}{(\Delta \alpha + 1 - q)^2}
+ \frac{1+q}{(1 - q)^2}
+ \frac{\lambda_{\min}}{\Delta}\label{eq:second_deriv}
\end{align}

Letting $\Delta \alpha = k$, and recalling the asymptotic limit of $\lambda_{\min}$, as $p \to \infty$, the expression becomes:
\begin{align}
    -\frac{(k+1+\ q)\alpha}{(k + 1 - q)^2}
+ \frac{1+q}{(1 - q)^2}
+ \frac{\alpha+1-2\sqrt{\alpha}}{k}.\label{eq71}
\end{align}

To ensure this quantity becomes negative as $\alpha \to \infty$, it suffices to find $k > 0$ and $q \in (0, 1)$ such that:
\begin{align}
    \frac{1}{k}-\frac{k+1+\ q}{(k + 1 - q)^2}<0.
\end{align}

As an example, one can choose $k = 1$, $q = 0.5$, and set $\alpha = 10$, $\Delta = 0.1$, which yields:
\begin{align}
    -\frac{(k+1+\ q)\alpha}{(k + 1 - q)^2}
+ \frac{1+q}{(1 - q)^2}
+ \frac{\alpha+1-2\sqrt{\alpha}}{k} \approx -0.4357<0
\end{align}
 which implies that $- \nabla^2 f_{\mathrm{TAP}}(a)$ may have negative eigenvalues in the neighborhood of $a = \frac{u_{\min}}{\|u_{\min}\|} \cdot \sqrt{p/2}$. Therefore, the TAP free energy is not globally concave under the uniform spherical prior.

\section{VERIFICATION OF RS ASSUMPTIONS FOR THE GAUSSIAN PRIOR}\label{section_B}

In this section, we verify that the assumptions required for the validity of the RS formula, as formulated by \citet{reeves2019replica}, are satisfied under the standard Gaussian prior $P_0 = \mathcal{N}(0,1)$. 
We adopt the random linear estimation model in \eqref{mod_1}.

Define the normalized mutual information and MMSE functions:
\begin{align}
I_p(\alpha):=\frac{1}{p}\,I(\boldsymbol{\beta};\mathbf{y}\mid \mathbf{x}),\qquad
M_p(\alpha):=\frac{1}{p}\mathrm{mmse}(\boldsymbol{\beta}\mid \mathbf{y},\mathbf{x}).
\end{align}

Let $\beta \sim P_0$ and $z \sim \mathcal{N}(0,1)$. Consider the scalar Gaussian channel
\begin{align}
y = \sqrt{s}\beta + z.
\end{align}

We define the scalar functions:
\begin{align}
I_\beta(s) = I(\beta;\sqrt{s}\beta+z), \qquad \mathrm{mmse}_\beta(s) = \mathrm{mmse}(\beta\mid \sqrt{s}\beta+z).
\end{align}
The RS potential is then defined as:
\begin{align}
\mathcal{R}(\alpha,u) 
= I_\beta\Big(\frac{\alpha}{1+u}\Big) + \frac{\alpha}{2}\Big(\log(1+u) - \frac{u}{1+u}\Big),
\end{align}

Using the I-MMSE identity and stationarity of $\mathcal{R}$, one obtains the RS fixed-point equation
\begin{align}
u = \mathrm{mmse}_\beta\Big(\frac{\alpha}{1+u}\Big).
\label{eq78}
\end{align}
It is convenient to view \eqref{eq78} as the intersection of two curves in the $(u,\alpha)$-plane. Define
\begin{align}
\alpha_{\mathrm{FP}}(u) 
:= (1+u)\mathrm{mmse}_\beta^{-1}(u),
\qquad
\alpha_{\mathrm{RS}}(u)
:= \inf\{\alpha\ge 0:\ u\in\arg\min_{v\ge 0}\mathcal{R}(\alpha,v)\}.
\end{align}
Equivalently, $\alpha_{\mathrm{RS}}(u)$ is characterized by the vanishing first variation of $\mathcal{R}$ at $u=v$.

The single-crossing property requires that the two curves $\alpha_{\mathrm{FP}}(u)$ and $\alpha_{\mathrm{RS}}(u)$ intersect at most once for $u\in(0,1)$. Under this property the following lemma holds:

\begin{lemma}{\rm{\citep{reeves2019replica}}}
Consider the random linear model in \eqref{mod_1}
where $\mathbf{x}\in\mathbb{R}^{n\times p}$ has i.i.d.\ entries $\mathbf{x}_{ij}\sim \mathcal N(0,1/p)$, the noise $Z\sim\mathcal N(0,I_m)$ is independent of $\mathbf{x}$, and the signal $\boldsymbol{\beta}=(\beta_1,\dots,\beta_p)$ has i.i.d.\ entries $\beta_i\sim P_0$ with $\mathbb{E}[\beta^4]<\infty$.

Assume $P_0$ satisfies the single-crossing property. Then, as $p\to\infty$,
\begin{align}
I_p(\alpha)\longrightarrow I_{\mathrm{RS}}(\alpha)
&:=\min_{u\ge 0}\mathcal R(\alpha,u),\\
M_p(\alpha)\longrightarrow M_{\mathrm{RS}}(\alpha)
&\in \arg\min_{u\ge 0}\,\mathcal R(\alpha,u).
\end{align}

\end{lemma}

Now we consider the special case $P_0=\mathcal{N}(0,1)$. We have $\mathbb{E}\beta^4=3<\infty$. For the scalar channel, $\mathrm{mmse}_\beta(s)=\frac{1}{1+s}$ and $I_\beta'(s)=\tfrac{1}{2}\mathrm{mmse}_\beta(s)$. Since $\mathrm{mmse}_\beta(s)=1/(1+s)$, its inverse on $(0,1)$ is $\mathrm{mmse}_\beta^{-1}(z)=(1-z)/z$. Hence
  \begin{align}
  \alpha_{\mathrm{FP}}(u)=(1+u)\frac{1-u}{u}=\frac{1-u^2}{u}.
  \end{align}
  The RS stationarity condition yields the same parametric relation between $\alpha$ and $u$:
  \begin{align}
  u = \frac{1}{1+\alpha/(1+u)}
  \quad\Longleftrightarrow\quad
  \alpha = \frac{1-u^2}{u},
  \end{align}
  so $\alpha_{\mathrm{RS}}(u)\equiv \alpha_{\mathrm{FP}}(u)$ on $(0,1)$.
Since $\alpha_{\mathrm{RS}}\equiv\alpha_{\mathrm{FP}}$ pointwise, the two curves coincide and in particular cross at most once. The single crossing property holds trivially.

All assumptions required by the framework of \citet{reeves2019replica} are satisfied for $P_\beta=\mathcal{N}(0,1)$. Consequently, along $n,p\to\infty$ with $n/p\to\alpha$,
\begin{align}
I_p(\alpha)\ \longrightarrow\ I_{\mathrm{RS}}(\alpha)=\min_{z\ge 0}\,\mathcal{R}(\alpha,u).\label{eq84}
\end{align}
where $u_\star(\alpha)$ is the unique minimizer of $\mathcal{R}(\alpha,\cdot)$ and solves $u=\mathrm{mmse}_\beta(\alpha/(1+u))$. And from \eqref{eq84}, we can easily obtain the convergence we want in \eqref{e_f}.

\section{PROOF OF LEMMA \ref{lem_9}}\label{app_C}

In this section we present the Concentration of $F_p$. Recall that, conditional on $X$,
\begin{align}
\mathcal{Z}_p(X,y) = (2\pi)^{-n/2}\,\det(A_\Delta)^{-1/2}\,
\exp\!\Big(-\tfrac12\,y^\top A_\Delta^{-1}y\Big),
\end{align}
where $A_\Delta=\Delta I_n+ XX^\top$.
Define
\begin{align}
f(X,y):=\frac1p\log \mathcal{Z}_p(X,y)
= -\frac{1}{2p}\log\det A_\Delta-\frac{1}{2p}\,y^\top A_\Delta^{-1}y-\frac{n}{2p}\log(2\pi).
\end{align}
We use the law of total variance:
\begin{align}
{\rm Var}\big(F_p\big) = \mathbb{E}\big[{\rm Var}(f\mid X)\big] + {\rm Var}\big(\mathbb{E}[f\mid X]\big).
\end{align}

Conditioning on $X$, we have $y\mid X\sim\mathcal N(0,A_\Delta)$ and
\begin{align}
\nabla_y f(X,y) = -\frac{1}{p}A_\Delta^{-1}y.
\end{align}
The Poincar\'e inequality with covariance $A_\Delta$ states that for smooth
$g:R^n\to R$ and $Y\sim\mathcal N(0,A_\Delta)$,
${\rm Var}(g(Y))\le \mathbb{E}\big[\nabla g(Y)^\top A_\Delta\nabla g(Y)\big]$.
Applying this with $g(y)=f(X,y)$ gives
\begin{align}
{\rm Var}(f\mid X)
\le \mathbb{E}\left[\frac{1}{p^2}y^\top A_\Delta^{-1}A_\Delta A_\Delta^{-1}y \Big|X\right]
= \frac{1}{p^2}\mathbb{E}\left[y^\top A_\Delta^{-1}y \Big|X\right]
= \frac{1}{p^2}{\rm tr}(A_\Delta^{-1}A_\Delta)
= \frac{n}{p^2}.
\end{align}
Taking expectation over $X$ yields
\begin{align}
\mathbb{E}[{\rm Var}(f\mid X)] \le\frac{n}{p^2} = \frac{\alpha}{p}.\label{lem_9_p1}
\end{align}

Then, we compute $\mathbb{E}[f\mid X]$. Since $\mathbb{E}[y^\top A_\Delta^{-1}y\mid X]={\rm tr}(A_\Delta^{-1}\mathbb{E}[yy^\top\mid X])={\rm tr}(A_\Delta^{-1}A_\Delta)=n$,
\begin{align}
\mathbb{E}[f\mid X] = -\frac{1}{2p}\log\det A_\Delta- \frac{n}{2p} - \frac{n}{2p}\log(2\pi),
\end{align}
only the $\log\det A_\Delta$ term depends on $X$. Matrix calculus gives
$\partial(\log\det A_\Delta)/\partial A_\Delta = A_\Delta^{-1}$ and
\begin{align}
dA_\Delta = (dXX^\top + XdX^\top).
\end{align}
By the chain rule and cyclicity of trace,
\begin{align}
d\mathbb{E}[f\mid X] = -\frac{1}{2p}\langle A_\Delta^{-1},dA_\Delta\rangle
= -\frac{\tau^2}{2p}{\rm tr}\big(A_\Delta^{-1}dX X^\top + A_\Delta^{-1}X dX^\top\big)
= -\frac{1}{p}{\rm tr}\big(A_\Delta^{-1}X dX^\top\big).
\end{align}
Hence the gradient is
\begin{align}
\nabla_X \mathbb{E}[f\mid X] = -\frac{1}{2p}A_\Delta^{-1}X.
\end{align}
Therefore
\begin{align}
\|\nabla_X \mathbb{E}[f\mid X]\|_F^2
= \frac{1}{4p^2}{\rm tr}\big(X^\top A_\Delta^{-2} X\big)
 \le \frac{1}{4p^2}\|A_\Delta^{-2}\|\|X\|_F^2
\le \frac{1}{4p^2\Delta^2}\|X\|_F^2,
\end{align}
since $A_\Delta\succeq \Delta I_n$ implies $\|A_\Delta^{-1}\|\le 1/\Delta$ and thus $\|A_\Delta^{-2}\|\le 1/\Delta^2$.

Now apply the Poincar\'e inequality to $X$:
\begin{align}
{\rm Var}(\mathbb{E}[f\mid X])
\le \frac{1}{p}\mathbb{E}\|\nabla_X \mathbb{E}[f\mid X]\|_F^2
\le \frac{1}{p}\cdot \frac{1}{4p^2\Delta^2}\mathbb{E}\|X\|_F^2
= \frac{1}{4p^3\Delta^2}\, \mathbb{E}\Big[\sum_{i,j}X_{ij}^2\Big]
= \frac{1}{4p^3\Delta^2} n
= \frac{1}{4\Delta^2}\frac{\alpha}{p^2}.\label{lem_9_p2}
\end{align}

Combing the bounds in \eqref{lem_9_p1} and \eqref{lem_9_p2},
\begin{align}
{\rm Var}(F_p) \le \frac{\alpha}{p} + \frac{1}{4\Delta^2}\frac{\alpha}{p^2}.
\end{align}
 Chebyshev’s inequality then implies
$F_p-\mathbb{E}[F_p]\to 0$ in probability.

\section{ADDITIONAL NUMERICAL STUDIES}\label{app_D}

In this section, we provide extended experiments that go beyond those presented in Section~5. Specifically, we study a grid of parameters with sampling ratios $\alpha = 0.75, 1.0,2.0,$ and noise levels $\Delta = 0.1,0.5,1.0$. For each $(\alpha, \Delta)$ pair, we evaluate the discrepancy
\begin{align}
D = \frac{1}{p}\|a_{\mathrm{TAP}} - a_{\mathrm{ridge}}\|^2,
\end{align}
as a function of the system size $p$.

In all settings tested, $D$ decreases with $p$, consistent with the theoretical prediction. The small oscillations observed at intermediate values of $p$ are consistent with finite-size effects and are expected to vanish at rate $O(1/p)$.  

The complete set of nine plots, covering all combinations of $(\alpha, \Delta)$, is shown in Figure~\ref{fig2}.

\newpage
\begin{figure}[h]
\centering
\includegraphics[width=\textwidth]{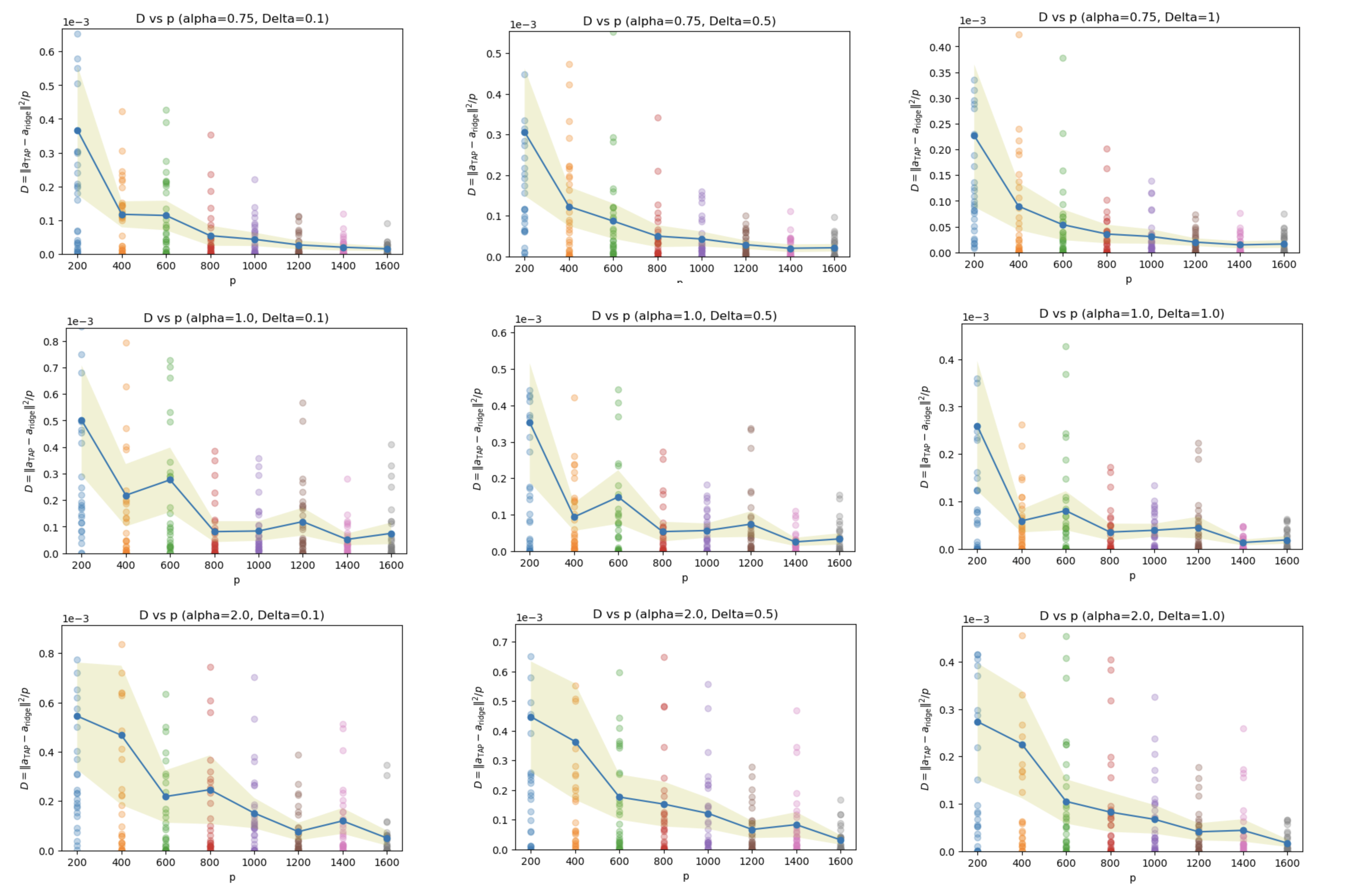}
\caption{Complete Collection of Results, Displaying the Discrepancy $D$ across All Combinations of $(\alpha,\Delta)$.}
\label{fig2}
\end{figure}

\end{document}